\newenvironment{poc}{\begin{proof}[Proof of claim]}{\end{proof}}
\newtheorem{theorem}{Theorem}[section]
\newtheorem{lemma}[theorem]{Lemma}
\newtheorem{corollary}[theorem]{Corollary}
\newtheorem{claim}[theorem]{Claim}
\newtheorem{question}[theorem]{Question}
\theoremstyle{definition}
\newtheorem{definition}[theorem]{Definition}
\newtheorem{remark}[theorem]{Remark}
\numberwithin{equation}{section}
\newcommand{\bP}{\operatorname{\mathbb{P}}}
\newcommand{\F}{\operatorname{\mathbb{F}}}
\begin{document}

\title{Blocking sets from a union of plane curves}

\author{Shamil Asgarli}
\address{Department of Mathematics \& Computer Science \\ Santa Clara University \\ CA 95053, United States} \email{sasgarli@scu.edu}

\author{Dragos Ghioca}
\address{Department of Mathematics \\ University of British Columbia \\ Vancouver, BC V6T 1Z2\\ Canada}
\email{dghioca@math.ubc.ca}

\author{Chi Hoi Yip}
\address{School of Mathematics\\ Georgia Institute of Technology\\ Atlanta, GA 30332\\ United States}
\email{cyip30@gatech.edu}

\subjclass[2020]{Primary 14N05, 51E21; Secondary 14C21, 14H50, 14G15}
\keywords{blocking set, plane curve, conic}

\begin{abstract} 
Motivated by a question of Erd\H{o}s on blocking sets in a projective plane that intersect every line only a few times, several authors have used unions of algebraic curves to construct such sets in $\mathbb{P}^2(\mathbb{F}_q)$. In this paper, we provide new constructions of blocking sets in $\mathbb{P}^2(\mathbb{F}_q)$ from a union of geometrically irreducible curves of a fixed degree $d$. We also establish lower bounds on the number of such curves required to form a blocking set. Our proofs combine tools from arithmetic geometry and combinatorics.
\end{abstract}

\maketitle

\section{Introduction}

Throughout the paper, $p$ denotes a prime, $q$ denotes a power of $p$, $\F_q$ denotes the finite field with $q$ elements, and $\overline{\F_q}$ denotes the algebraic closure of $\F_q$. A set of points $B\subseteq \bP^2(\F_q)$ is a \emph{blocking set} if it intersects every $\F_q$-line. A blocking set is called \emph{trivial} if it contains all $q+1$ points of an $\mathbb{F}_q$-line; otherwise, it is \emph{nontrivial}. The smallest trivial blocking sets are lines themselves, consisting of $q+1$ points. 

The study of blocking sets is a central topic in finite geometry and design theory \cite{BSS2012, PS12}. In this paper, we study blocking sets arising from a union of plane curves. Our motivation comes from the literature on a question of Erd\H{o}s~\cite{ESS83} and from our previous work \cite{AGY23}, which treated the case of a single irreducible plane curve.

Inspired by questions related to intersection properties of set families, Erd\H{o}s \cite{ESS83} considered the case in which the set family consists of the lines in a projective plane; this naturally led him to study blocking sets that meet each line only a few times.\footnote{This 
question is attributed to Erd\H{o}s in the introduction of the paper 
by Erd\H{o}s--Silverman--Stein~\cite{ESS83}.} More precisely, let $k=k(n)$ denote the least positive integer such that in any projective plane of order $n$, there exists a blocking set $B$ of points such that $|B\cap L| < k$ for every line $L$. Erd\H{o}s asked whether $k(n)$ is bounded by a universal constant. Using the probabilistic method, Erd\H{o}s--Silverman--Stein \cite{ESS83} showed that for any constant $c>2e$, we have $k(n)<c\log n$ for all sufficiently large $n$. In the same paper, they also provided a constructive approach to show $k(n)<n-c'\sqrt{n}$ for some absolute constant $c'>0$. For the exposition of how the approaches used in \cite{ESS83} can be extended to construct blocking sets and other objects in finite geometry, we refer to the surveys by Sz\H{o}nyi~\cite{Szo92b} and G\'acs--Sz\H{o}nyi~\cite{GS08}. 

Erd\H{o}s' question is still wide open, and much of the subsequent work has focused on the case of a projective plane $\mathbb{P}^2(\mathbb{F}_q)$ over a finite field of order $q$. In this setting, the algebraic and geometric structure of projective Galois planes provides tools unavailable for general projective planes. In particular, it is natural to look for constructions of blocking sets arising from algebraic curves; see, for example, \cite{AL85, Szo92b, Szo92a, Ugh88}. This is partly because the number of intersections between irreducible plane curves and lines is controlled by B\'ezout's theorem. We introduce the following definition to formalize the construction of blocking sets from collections of plane curves.

\begin{definition}\label{def:blocking-family}
A blocking set $B$ in $\mathbb{P}^2(\mathbb{F}_q)$ is \emph{constructed from a union of plane curves} if $B=\bigcup_{i=1}^{\ell} C_i(\mathbb{F}_q)$ for some plane curves $C_1, \dots, C_{\ell}$ defined over $\mathbb{F}_q$, where $C_i$ is \emph{geometrically irreducible} (that is, irreducible over $\overline{\F_q}$) and has degree $d_i=\deg(C_i)>1$ for each $i$. We also say that $C_1, \dots, C_{\ell}$ form a \emph{blocking family of degree $(d_1, \dots, d_{\ell})$}.
\end{definition}

The hypothesis $d_i>1$ in Definition~\ref{def:blocking-family} is necessary to produce \emph{nontrivial} blocking sets. The geometric irreducibility condition is also natural, as one could otherwise replace a reducible curve with its irreducible components. Moreover, if $C_i$ were irreducible over $\mathbb{F}_q$, but not geometrically irreducible, a standard application of B\'ezout's theorem shows that $|C_i(\mathbb{F}_q)|\leq \frac{d_i^2}{4}$ (see, for example, \cite[Lemma 3.1]{AGY23}); consequently, $C_i(\mathbb{F}_q)$ blocks at most $\frac{d_i^2}{4}(q+1)$ lines (which is not efficient for constructing blocking sets). In contrast, geometrically irreducible curves have $q + O(\sqrt{q})$ points by the Hasse--Weil bound (see \cite{AP96} for a version that applies to singular curves).

 In our previous work \cite{AGY23-FFA-pencils, AGY23, AGY22b, AGY26}, we studied blocking sets arising from points of an irreducible plane curve, which corresponds to the case $\ell=1$. In particular, we showed in \cite{AGY23} that irreducible blocking curves of low degree $d\geq 2$ (specifically, $d < q^{1/6}$) do not exist; we also provided various constructions of irreducible blocking curves.

Let $B$ be a blocking set constructed from $C_1, \dots, C_{\ell}$ as in Definition~\ref{def:blocking-family}. By B\'ezout's theorem, we have $|B\cap L|\leq \sum_{i=1}^{\ell} \deg(C_i)$ for each $\mathbb{F}_q$-line $L$. This implies $k\leq \sum_{i=1}^{\ell} \deg(C_i)$ for Erd\H{o}s' question, though this bound is often not sharp. Next, we discuss past work using such constructions and state our new contributions.

Let $q$ be an odd prime power. Abbott and Liu \cite{AL85} constructed a blocking set in $\mathbb{P}^2(\mathbb{F}_q)$ from a union of around $\log_2 q$ conics. For any constant $c>2/\log 2$, their construction produces a set with $k< c\log q$ in Erd\H{o}s' question (while their bound $c>2/\log 2$ improves $c>2e$ from Erd\H{o}s--Silverman--Stein \cite{ESS83}, it is specific to projective Galois planes). A similar construction was independently discovered by Ughi~\cite{Ugh88}. She also proved that the number of nonsingular (equivalently, geometrically irreducible) conics required to form a blocking set must tend to infinity as $q \to \infty$. In a related work, Sz\H{o}nyi~\cite{Szo92a} constructed minimal blocking sets in $\mathbb{P}^2(\mathbb{F}_q)$ from a subset of a pencil of conics $C_a$ parametrized by $a\in \mathbb{F}_q$; the values of $a$ that realize minimal blocking sets correspond to maximal independent sets in the Paley graph over $\F_q$ (here $q \equiv 1 \pmod 4$). In the same paper, he also deduced that at least $c\log q$ conics from this specific pencil $\{C_{a}\}_{a\in \mathbb{F}_q}$ are required to form a blocking set. Our first result shows that this logarithmic lower bound holds for a blocking set formed by \emph{any} collection of geometrically irreducible conics. 

\begin{theorem}\label{thm:conics} Let $q$ be an odd prime power. There is a constant $c_0>0$ such that no blocking set in $\mathbb{P}^2(\mathbb{F}_q)$ can be constructed from a union of fewer than $c_0\log q$ conics.
\end{theorem}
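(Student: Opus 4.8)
The plan is to count the lines of $\bP^2(\F_q)$ that are \emph{external} to every conic $C_i$ and to show this count is strictly positive whenever $\ell < c_0\log q$; since a blocking family is exactly a family of conics admitting no common external line (a line meets $B=\bigcup_i C_i(\F_q)$ iff it is secant or tangent to some $C_i$, i.e.\ iff it is not external to all of them), this yields the lower bound on $\ell$. I identify each line with its coordinate point $[a:b:c]$ in the dual plane. Since $q$ is odd and each $C_i$ is geometrically irreducible, hence a smooth conic given by a nondegenerate symmetric matrix, restricting the defining form $Q_i$ to a line produces a binary quadratic whose discriminant $\Delta_i(a,b,c)$ equals, up to a nonzero scalar $\kappa_i$, the value of the smooth dual conic $Q_i^{*}$ at $[a:b:c]$. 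The line is external, tangent, or secant to $C_i$ according as $\eta(\Delta_i)=-1,0,+1$, where $\eta$ is the quadratic character of $\F_q$; thus ``external to $C_i$'' is detected by $\tfrac12\bigl(1-\eta(\kappa_i Q_i^{*}(a,b,c))\bigr)$, which equals $1,\tfrac12,0$ in the external, tangent, secant cases respectively.

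I would then form the sum $\Sigma=\sum_{L}\prod_{i=1}^{\ell}\tfrac12\bigl(1-\eta(\kappa_i Q_i^{*}(L))\bigr)$ over all $q^2+q+1$ lines. A line external to all conics contributes $1$, a line secant to some conic contributes $0$, and a line tangent to some but secant to none contributes a value in $(0,1]$; hence $N_0\ge \Sigma-R$, where $N_0$ is the number of lines external to all $C_i$ and $R$ is at most the number of tangent lines, $R\le \ell(q+1)$. Expanding the product gives
\[
\Sigma = 2^{-\ell}\sum_{T\subseteq\{1,\dots,\ell\}}(-1)^{|T|}\Bigl(\prod_{i\in T}\eta(\kappa_i)\Bigr)\sum_{L}\eta\Bigl(\prod_{i\in T}Q_i^{*}(L)\Bigr),
\]
whose $T=\emptyset$ term is the main term $2^{-\ell}(q^2+q+1)$.

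The crux is to bound, for each nonempty $T$, the character sum $\sum_{L}\eta\bigl(\prod_{i\in T}Q_i^{*}(L)\bigr)$ over the dual $\bP^2(\F_q)$. Distinct smooth conics have pairwise non-proportional smooth dual conics, so $f_T:=\prod_{i\in T}Q_i^{*}$ is squarefree of positive even degree $2|T|$; in particular $\eta\circ f_T$ is well defined on $\bP^2$, and $f_T$ is not a scalar times a square. Consequently the double cover $w^2=f_T$ is geometrically irreducible, and a Lang--Weil (or Weil-type) estimate gives $\bigl|\sum_{L}\eta(f_T(L))\bigr|\le B(|T|)\,q^{3/2}$ with $B$ depending only polynomially on $|T|$. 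Using $\sum_{t\ge1}\binom{\ell}{t}B(t)\le \mathrm{poly}(\ell)\,2^{\ell}$, I obtain $\Sigma\ge 2^{-\ell}(q^2+q+1)-\mathrm{poly}(\ell)\,q^{3/2}$, and therefore $N_0\ge 2^{-\ell}q^2-c\,\mathrm{poly}(\ell)\,q^{3/2}$. This is strictly positive once $q^{1/2}>c\,\mathrm{poly}(\ell)\,2^{\ell}$, which holds for all large $q$ provided $\ell<c_0\log q$ with any fixed $c_0<\tfrac{1}{2\log 2}$; the finitely many small values of $q$ are absorbed by shrinking $c_0$ (when $c_0\log q\le 1$ the statement is vacuous, as one conic is always needed). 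Since a blocking family forces $N_0=0$, the bound $\ell\ge c_0\log q$ follows.

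The main obstacle is the character-sum step: one must verify that $f_T$ is squarefree (equivalently that $w^2=f_T$ is geometrically irreducible) for every nonempty $T$, and then track the dependence of the error constant $B(|T|)$ on the degree $2|T|$, since it is summed against the binomial coefficients $\binom{\ell}{t}$. As long as $B(t)$ grows at most polynomially in $t$, the argument goes through. I note that the genuinely two-dimensional estimate is what is needed here; it can be obtained either from Lang--Weil on the double cover or, more elementarily, by slicing $\bP^2$ into lines and applying the one-variable Weil bound on each slice while handling the $O_{|T|}(1)$ degenerate slices trivially, both routes yielding the required $O_{|T|}(q^{3/2})$.
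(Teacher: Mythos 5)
Your proposal is correct and follows essentially the same route as the paper: identify ``external to $C_i$'' with the quadratic character of the dual conic's value at the line, observe that distinct conics have non-proportional dual conics so that no subproduct is a constant times a square, and conclude via a two-variable Weil/Lang--Weil character-sum estimate that some line is external to all $C_i$ unless $\ell \gg \log q$. The only difference is that the paper outsources the inclusion--exclusion and character-sum bookkeeping to an explicit lemma of Slavov (with error term $(2d)^{2\ell}q^{3/2}$), whereas you expand the indicator product by hand and bound each $\sum_L \eta(\prod_{i\in T}Q_i^{*}(L))$ directly; both yield $\ell\ge c_0\log q$, and your tracking of a polynomial-in-$|T|$ constant even gives the slightly cleaner $c_0<\tfrac{1}{2\log 2}$.
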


For an odd prime power $q$, let $f(q)$ be the minimum integer $\ell$ such that there is a blocking set in $\bP^2(\F_q)$ constructed from $\ell$ conics. By the discussion above, we know $c_1\log q\leq f(q)\leq c_2\log q$ for some absolute constants $c_1,c_2>0$. Determining an asymptotically sharp bound on $f(q)$ seems out of reach.

As we were finalizing the paper, we discovered that Sz\H{o}nyi~\cite{Szo92b} proved an analogous lower bound for blocking sets in inversive planes constructed from a union of circles. In the final remark of the same paper, he mentioned that the same proof idea applies to conics in $\mathbb{P}^2(\mathbb{F}_q)$, so we believe that Theorem~\ref{thm:conics} is known to some experts. We will present our own proof of Theorem~\ref{thm:conics} in Section~\ref{sec:conics}, and briefly explain Sz\H{o}nyi's suggested proof in Remark~\ref{rem:S}. 

The hypothesis that $q$ is odd in Theorem~\ref{thm:conics} is necessary. Indeed, Ill\'es, Sz\H{o}nyi, and Ferenc \cite{ISF91} showed that for Erd\H{o}s' question in $\mathbb{P}^2(\F_{2^r})$, the bound $k\leq 6$ holds if $r$ is even, and $k\leq 7$ holds if $r$ is odd. Their construction still employs a union of nonsingular conics.

For curves of higher degree, our next result generalizes Ughi's result \cite[Proposition 2]{Ugh88} on the number of geometically irreducible conics needed to form a blocking set in odd characteristic.

\begin{theorem}\label{thm:intro-unbounded}
Let $d\geq 3$. Let $\ell(q)$ be the minimum integer $\ell$ such that there exists a blocking set in $\mathbb{P}^2(\mathbb{F}_q)$ constructed from a union of $\ell$ plane curves each having degree at most $d$. Let $\mathcal{Q}_d$ be the set of prime powers $q$ such that $p=\operatorname{char}(\mathbb{F}_q)>d$. Then for $q \in \mathcal{Q}_d$,  we have $\ell(q) \to \infty$ as $q\to\infty$.
\end{theorem}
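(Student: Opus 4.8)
The plan is to argue by contradiction. If the conclusion fails, then there is a constant $N_0$ and an infinite sequence of $q \in \mathcal{Q}_d$ tending to infinity for which $\bP^2(\F_q)$ admits a blocking set built from curves $C_1,\dots,C_\ell$ with $\ell \le N_0$ and each $\deg C_i = d_i \in [2,d]$. Since there are only finitely many degree types $(d_1,\dots,d_\ell)$ with $\ell\le N_0$ and $d_i \le d$, I may pass to a subsequence along which the type is fixed; in particular the total degree $D := \sum_i d_i \le N_0 d$ is bounded along the sequence. The goal is then to show that, for all large $q$ in this sequence, the union $\bigcup_i C_i(\F_q)$ fails to meet some $\F_q$-line, contradicting the blocking property. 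Note that over $\overline{\F_q}$ every line meets every $C_i$ by B\'ezout, so the existence of an external line is a purely arithmetic phenomenon and must be detected by a point count over $\F_q$; in particular the naive bounds coming from $|\bigcup_i C_i(\F_q)|\le \ell q + O_D(\sqrt q)$ and from second-moment (secant-counting) estimates are too weak, since blocking sets from $\approx \log q$ conics genuinely exist.

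The main step is to count external lines via the incidence correspondence. Let $\check{\bP}^2$ be the dual plane parametrizing lines, and consider the degree-$D$ cover $\pi : Z \to \check{\bP}^2$ whose fiber over a line $L$ is the intersection scheme $L \cap \bigl(\bigcup_i C_i\bigr)$. Because each $C_i$ is defined over $\F_q$, the geometric monodromy group $G$ of $\pi$ preserves the partition of the $D$ intersection points into the $\ell$ blocks coming from $C_1,\dots,C_\ell$, so $G \subseteq S_{d_1}\times\cdots\times S_{d_\ell}$. An $\F_q$-line $L$ is external to $\bigcup_i C_i(\F_q)$ precisely when the Frobenius class attached to $L$ acts without fixed points on each block, i.e.\ is a derangement in every coordinate. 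Applying an effective Chebotarev/Lang--Weil estimate to the Galois closure of $\pi$ over the surface $\check{\bP}^2$, the number of external $\F_q$-lines equals $\frac{c(G)}{|G|}\, q^2 + O_{D}\!\left(q^{3/2}\right)$, where $c(G)$ counts the $g \in G$ that are fixed-point-free in every block. Since $|G| \le (d!)^{N_0}$ is bounded along the sequence, it suffices to prove $c(G) \ge 1$: then the main term dominates and external lines exist once $q$ is large.

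To produce a simultaneous derangement I would first control each block separately, and here the hypothesis $p=\operatorname{char}(\F_q) > d$ is essential: it guarantees that the Gauss map of each $C_i$ is separable and that the generic line section is in uniform position, so by the uniform position principle (Harris, and Rathmann in positive characteristic) the block projection $G \to S_{d_i}$ is \emph{onto the full symmetric group}. In particular each $S_{d_i}$ contains a $d_i$-cycle, which is fixed-point-free, so derangements are available in every coordinate. If the blocks were jointly independent, i.e.\ $G = \prod_i S_{d_i}$, then $c(G)=\prod_i D_{d_i}>0$ immediately, where $D_{d_i}$ is the number of derangements in $S_{d_i}$, and we would be done.

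The main obstacle, and the delicate point, is that the given curves are arbitrary and distinct rather than in general position, so $G$ may be a \emph{proper} subgroup of $\prod_i S_{d_i}$ even though every block projection is surjective. By Goursat's lemma such a subgroup comes from identifications of common quotients of the $S_{d_i}$: the sign character $\mathbb{Z}/2$, and, when two degrees coincide, full isomorphisms $S_{d_i}\cong S_{d_j}$. A sign-type identification cannot force $c(G)=0$, since each $S_{d_i}$ with $d_i\ge 3$ contains derangements of both signs (the case $d_i=2$ being handled directly). The genuinely dangerous case is the exotic outer automorphism of $S_6$, which maps every derangement to a permutation \emph{with} a fixed point and would yield $c(G)=0$ for a ``twisted-diagonal'' pairing of two sextics. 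I expect the resolution, which is the heart of the proof, to be that any diagonal identification between two components must be induced by an actual correspondence over $\check{\bP}^2$ between their intersection points with a moving line; such a geometric correspondence intertwines the natural permutation actions on the fibers and therefore realizes only inner identifications, which preserve cycle type and hence derangement-ness. Ruling out the outer-automorphism cancellation forces $c(G)\ge 1$, produces an external line for all large $q$, and contradicts the blocking property, so that $\ell(q)\to\infty$.
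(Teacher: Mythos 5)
Your overall strategy is the same as the paper's: pass to the dual plane, attach to each transverse $\F_q$-line the Frobenius conjugacy class of its intersection with $\bigcup_i C_i$, observe that an external line is exactly one whose class is a simultaneous derangement, and then invoke an effective Chebotarev/Lang--Weil estimate over $(\mathbb{P}^2)^{\ast}$ to show such lines exist once $q$ is large relative to the bounded total degree. The paper does precisely this, citing a theorem of Entin which, for a reflexive curve with geometrically irreducible components (reflexivity being guaranteed by $p>d$), gives equidistribution of Frobenius classes with respect to the \emph{full} product $S_{d_1}\times\cdots\times S_{d_m}$; it then takes the class consisting of a $d_i$-cycle in each factor, which has density $\prod_i 1/d_i>0$.

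The genuine gap in your write-up is exactly the step you flag as ``the heart of the proof'': showing that the geometric monodromy group $G$ contains a simultaneous derangement (equivalently, ruling out the Goursat-type identifications, in particular the outer automorphism of $S_6$). Your proposed resolution --- that any diagonal identification ``must be induced by an actual correspondence over $(\mathbb{P}^2)^{\ast}$ between the intersection points with a moving line'' and is therefore inner --- is an assertion, not an argument; nothing in the setup produces such a correspondence from the mere group-theoretic fact that $G$ is a fiber product. The correct way to close this (and the content of the result the paper cites) uses reflexivity more seriously: for a reflexive curve whose components are geometrically irreducible, the local monodromy at a generic tangent line is a \emph{simple transposition supported in a single block} (generic tangency has order exactly $2$ and occurs at a unique point, hence on a unique component), and a subgroup of $\prod_i S_{d_i}$ that is transitive on each block and contains, for each $i$, a transposition acting trivially outside block $i$ must be the full product. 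Without this (or an equivalent input), $c(G)\ge 1$ is not established and the main term of your count could vanish. Two smaller issues: you should also address the arithmetic-versus-geometric monodromy distinction before reading off Frobenius equidistribution, and note that for Theorem~\ref{thm:intro-unbounded} itself Definition~\ref{def:blocking-family} already forces each $C_i$ to be geometrically irreducible, so your restriction to that case is harmless (the paper's more general Theorem~\ref{thm:bounded-number-of-curves} handles non-geometrically-irreducible components separately by B\'ezout).
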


We prove Theorem~\ref{thm:intro-unbounded} in Section~\ref{sec:CDT}. The key ingredient of the proof is a version of Chebotarev density theorem due to Entin \cite{Ent21}. The hypothesis $p=\operatorname{char}(\mathbb{F}_q)>d$ in Theorem~\ref{thm:intro-unbounded} is necessary. Indeed, Bruen and Fisher \cite{BF74} found a blocking set in $\mathbb{P}^2(\F_{3^r})$ formed by taking a union of geometrically irreducible cubic curves and showing that $k\leq 5$ for Erd\H{o}s' question. More generally, Boros \cite{Bor88} proved that if $q=p^{r}$ with a prime $p\geq 3$, then $k\leq p+2$ for Erd\H{o}s' question; this was achieved by considering the union of two carefully chosen geometrically irreducible degree $p$ curves together with a single point. Thus, the hypothesis $p>d$ in Theorem~\ref{thm:intro-unbounded} is sharp.

Our final result is a counterpart to these lower bounds by demonstrating that a blocking family can indeed be formed from approximately $c_d \log q$ geometrically irreducible curves of degree $d$. 

\begin{theorem}\label{thm:main}
Let $d\geq 3$. There exists a constant $c_d>0$ such that for any prime power $q$ and any integer $\ell \geq c_d \log q$, there exists a blocking family of degree $\underbrace{(d, d, \dots, d)}_{\ell \text{ times}}$ over $\mathbb{F}_q$.
\end{theorem}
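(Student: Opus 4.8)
The plan is to build the family probabilistically, choosing $\ell$ geometrically irreducible curves of degree $d$ at random and showing that with positive probability their union meets every $\F_q$-line. Write $N=\binom{d+2}{2}$ for the dimension of the space $V_d$ of degree-$d$ forms in three variables, and let $\mathcal I_d\subseteq V_d$ be the subset of geometrically irreducible forms. I would sample $F_1,\dots,F_\ell$ independently and uniformly from $\mathcal I_d$, so that each curve $C_i=\{F_i=0\}$ is automatically geometrically irreducible of degree $d$, exactly the shape required by Definition~\ref{def:blocking-family}. For a fixed line $L$, the curve $C_i$ fails to block $L$ precisely when the restriction $F_i|_L$, a binary form of degree $d$ on $L\cong\bP^1$, has no root in $\bP^1(\F_q)$. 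If for every $L$ this failure probability is at most some $\rho_d<1$ depending only on $d$, then by independence the probability that all $\ell$ curves miss a fixed $L$ is at most $\rho_d^{\ell}$, and a union bound over the $q^2+q+1$ lines gives total failure probability at most $(q^2+q+1)\rho_d^{\ell}$. Taking $c_d$ a large enough multiple of $1/\log(1/\rho_d)$ makes this less than $1$ for every $\ell\ge c_d\log q$, producing a blocking family of that size; since the list $C_1,\dots,C_\ell$ may repeat entries (the union is unchanged), this also covers arbitrarily large $\ell$ and disposes of the ``for any $\ell\ge c_d\log q$'' clause.

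The heart of the matter is the uniform bound $\rho_d<1$, and the key point is that restriction to a line behaves well on random forms. Fixing coordinates so that $L=\{Z=0\}$, the map $F\mapsto F|_L=F(X,Y,0)$ is a surjective $\F_q$-linear map from $V_d$ onto the $(d+1)$-dimensional space of binary forms, so when $F$ is uniform over $V_d$ its restriction $F|_L$ is uniform over binary forms of degree $d$. For such a form, evaluation at a point of $\bP^1(\F_q)$ is a nonzero linear functional, so each of the $q+1$ points is a root with probability $1/q$, and (since $d\ge1$) any two distinct points impose independent conditions, giving joint root-probability $1/q^2$. A Bonferroni lower bound then yields $\Pr[F\text{ blocks }L]\ge \frac{q+1}{q}-\binom{q+1}{2}\frac{1}{q^2}=\frac{q+1}{2q}>\tfrac12$, uniformly in $q$ and $d$. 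To pass from $V_d$ to the conditional distribution on $\mathcal I_d$ I would use that geometrically reducible forms are rare: a standard dimension count (the reducible locus is covered by the images of the multiplication maps $V_e\times V_{d-e}\to V_d$, which have dimension at most $N-1$ once $d\ge3$, together with even smaller contributions from forms that are $\F_q$-irreducible but split over an extension) shows $\#(V_d\setminus\mathcal I_d)=O_d(q^{N-1})$, a proportion $\varepsilon_d(q)=O_d(1/q)$. Hence $\#\{F\in\mathcal I_d:F\text{ blocks }L\}\ge(\tfrac12-\varepsilon_d(q))q^{N}$, and dividing by $\#\mathcal I_d\le q^N$ gives conditional blocking probability at least $\tfrac12-\varepsilon_d(q)\ge\tfrac14$ once $q\ge Q_0(d)$ for a suitable threshold, so $\rho_d=\tfrac34$ works for all large $q$.

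For the finitely many small prime powers $q<Q_0(d)$ the estimate degenerates, since reducible forms need no longer be rare, so I would argue existence directly. It suffices to produce, for each such $q$, any blocking family of degree $(d,\dots,d)$ of bounded size, because $q$ is bounded in terms of $d$ and that bound can be absorbed into $c_d$ using $\log q\ge\log2$. The elementary input is that every point of $\bP^2(\F_q)$ lies on a geometrically irreducible degree-$d$ curve defined over $\F_q$: the rational curve $YZ^{d-1}=X^d$ is geometrically irreducible, carries $\F_q$-points, and by transitivity of $\mathrm{PGL}_3(\F_q)$ on $\bP^2(\F_q)$ one of its translates (still geometrically irreducible of degree $d$) passes through any prescribed point. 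Choosing, for each of the $q^2+q+1$ lines, one such curve through one of its $\F_q$-points yields a blocking family of size at most $q^2+q+1\le Q_0(d)^2+Q_0(d)+1$, and enlarging $c_d$ so that $c_d\log2$ exceeds this quantity completes the small-$q$ case.

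I expect the main obstacle to be making every estimate uniform in $q$ while keeping $c_d$ a function of $d$ alone, and in particular extracting the proportion $O_d(1/q)$ of geometrically reducible forms cleanly; the restriction-to-a-line computation and the union bound are routine once that is in hand. A fully constructive variant is available by derandomizing: since every geometrically irreducible curve blocks a fixed line with probability at least $\tfrac14$, for any set $U$ of still-unblocked lines some $F\in\mathcal I_d$ blocks at least $\tfrac14|U|$ of them, so adding curves one at a time shrinks the unblocked set by a factor $\tfrac34$ at each step and exhausts it after $O(\log q)$ steps.
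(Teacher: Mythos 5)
Your proposal is correct and follows essentially the same route as the paper's first (probabilistic) proof in Section~\ref{sec:random}: the same Bonferroni/inclusion--exclusion estimate over the $q+1$ points of a fixed line shows that roughly half of all geometrically irreducible degree-$d$ curves block it, and the same convexity-based dimension count (products $V_e\times V_{d-e}$ plus norm forms from extensions) controls the geometrically reducible locus. The only difference is packaging: you take a union bound over $\ell$ independent uniform samples where the paper invokes Stein's covering lemma --- whose proof is precisely your closing derandomization remark --- which changes only the constant $c_d$, and your explicit treatment of the finitely many small $q$ via translates of $yz^{d-1}=x^d$ is a reasonable way to handle a range the paper leaves implicit.
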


We will give two proofs of Theorem~\ref{thm:main}, one probabilistic in Section~\ref{sec:random} and one through explicit equations in Section~\ref{sec:alg}. In the first proof, we use a randomized construction which produces a better constant, namely, $c_d=4-o(1)$ as $q\to \infty$. The randomized construction is also more flexible, as it can produce multiple blocking sets; see Remark~\ref{rem:multiple}. The more precise statement for the second proof appears as Theorem~\ref{thm:construction}, which gives $c_d=O(d)$. On the other hand, the second proof has the advantage that, when $\gcd(d,q-1)>1$ and we restrict to a certain pencil of curves, it is optimal up to a constant multiplicative factor; see the end of Section~\ref{sec:alg} for discussion.

\section{Constructions from conics}\label{sec:conics}

In this section, we prove Theorem~\ref{thm:conics}. Throughout the section, we assume that $q$ is an odd prime power.

A key ingredient in our proof is an effective version of the Lang--Weil bound \cite{LW54}. A standard application of Weil's bound gives an asymptotic formula for the number of $x\in \F_q$ such that $f_i(x)$ is a square in $\F_q$ for all $i$, where $f_1, f_2, \ldots, f_\ell \in \F_q[x]$ are ``independent"; see, for example, \cite[Lemma 1]{Szo92a}. Recently, Slavov \cite{S26} extended this result to multivariable polynomials with the help of an explicit version of the Lang--Weil bound by Cafure and Matera~\cite{CM06}. The following lemma is a special case of his result \cite[Theorem 3 and Remark 13]{S26}.

\begin{lemma}[Slavov]\label{lem:S_LW}
Let $n,\ell,d$ be positive integers and $q$ be an odd prime power. Let $f_1, f_2,\ldots, f_\ell$ be polynomials in $\F_q[x_1,x_2,\ldots, x_n]$ with degree $d$. Suppose that for any nonempty subset $I \subseteq \{1,2,\ldots, \ell\}$, the product $\prod_{i \in I} f_i$ is not a constant multiple of the square of a polynomial in $\F_q[x_1,\ldots, x_n]$. Then the number of $(a_1, a_2, \ldots, a_n)\in \F_q^n$ such that $f_i(a_1,a_2, \ldots, a_n)$ is a nonzero square in $\F_q$ for all $1\leq i \leq \ell$ is 
$$
\frac{q^n}{2^\ell}+O((2d)^{2\ell} q^{n-1/2}+(2d)^{13\ell/3}q^{n-1}),
$$
where the implied constant in the error term is absolute.
\end{lemma}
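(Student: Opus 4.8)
The plan is to run the classical quadratic-character argument—the multivariable analogue of Sz\H{o}nyi's \cite[Lemma 1]{Szo92a}—and to extract the explicit error from the effective Lang--Weil estimate of Cafure--Matera \cite{CM06}. Let $\chi$ be the quadratic character of $\F_q$, extended by $\chi(0)=0$, so that $\tfrac12(1+\chi(t))$ is the indicator of ``$t$ is a nonzero square'' except that it equals $\tfrac12$ when $t=0$. The set of $a\in\F_q^n$ at which some $f_i$ vanishes lies in a union of $\ell$ hypersurfaces of degree $d$ and hence has size $O(d\ell\,q^{n-1})$; since each factor $\tfrac12(1+\chi(f_i(a)))$ lies in $[0,1]$, replacing the exact indicator by the product of these factors introduces an error of this size. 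I would therefore first reduce to estimating
$$
\sum_{a\in\F_q^n}\ \prod_{i=1}^\ell \frac{1+\chi(f_i(a))}{2}
=\frac{1}{2^\ell}\sum_{I\subseteq\{1,\dots,\ell\}}\ \sum_{a\in\F_q^n}\chi\!\Bigl(\prod_{i\in I}f_i(a)\Bigr),
$$
where the product has been expanded over all subsets $I$.

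The empty subset contributes the main term $q^n/2^\ell$. For each nonempty $I$, the hypothesis that $g_I:=\prod_{i\in I}f_i$ is not a constant multiple of the square of a polynomial means exactly that its squarefree part is nonconstant, so $y^2-g_I(x)$ is irreducible over $\overline{\F_q}(x_1,\dots,x_n)$ and the affine hypersurface $W_I:\ y^2=g_I(x_1,\dots,x_n)$ in $\mathbb{A}^{n+1}$ is geometrically irreducible of dimension $n$ and degree at most $d|I|+1\le (2d)^\ell$. Counting $\F_q$-points of $W_I$ fiber by fiber over the $x$-coordinates gives $\#W_I(\F_q)=\sum_{a}\bigl(1+\chi(g_I(a))\bigr)=q^n+\sum_a\chi(g_I(a))$, so each inner character sum equals $\#W_I(\F_q)-q^n$ and is thus governed by the deviation of $W_I$ from its Lang--Weil main term.

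I would then apply the Cafure--Matera bound to each $W_I$: for a geometrically irreducible affine variety of dimension $n$ and degree $\delta$, the point count deviates from $q^n$ by $O(\delta^2 q^{n-1/2}+\delta^{13/3}q^{n-1})$, the exponents $2$ and $13/3$ being precisely those in the statement. Summing the $2^\ell-1$ nonempty contributions, dividing by $2^\ell$, and bounding every degree crudely by $(2d)^\ell$ then yields the claimed error $O((2d)^{2\ell}q^{n-1/2}+(2d)^{13\ell/3}q^{n-1})$ with an absolute implied constant. The only place the hypothesis enters is the geometric irreducibility of the $W_I$, and the reason the condition is imposed on \emph{every} nonempty subset, rather than on the individual $f_i$, is exactly to guarantee that no product $\prod_{i\in I}f_i$ degenerates into a square; this is the step I expect to require the most care, since it is where the combinatorial ``independence modulo squares'' hypothesis is converted into an algebraic-geometric irreducibility statement. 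As these estimates are already packaged in full generality in \cite{S26}, the cleanest route is to verify that the present hypotheses specialize those of \cite[Theorem 3 and Remark 13]{S26} (all prescribed character values equal to $+1$, all degrees equal to $d$) and to invoke that result directly.
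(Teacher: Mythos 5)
Your proposal is correct and matches the paper's treatment: the paper does not prove this lemma itself but quotes it as a special case of Slavov's result \cite[Theorem 3 and Remark 13]{S26}, which is exactly the route you fall back on in your final sentence. Your sketch of the underlying argument---expanding the quadratic-character indicator over subsets $I$, converting each character sum $\sum_a\chi\bigl(\prod_{i\in I}f_i(a)\bigr)$ into the point count of the geometrically irreducible hypersurface $y^2=\prod_{i\in I}f_i$, and applying the effective Lang--Weil bound of Cafure--Matera \cite{CM06}---is the standard proof behind that citation and is sound, including your identification of the ``not a constant multiple of a square'' hypothesis as precisely what guarantees geometric irreducibility of each double cover.
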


Our proof also uses the concept of dual curves. Recall that the points of the \emph{dual projective plane} $(\mathbb{P}^2)^{\ast}$ correspond to lines in $\mathbb{P}^2$. Given a geometrically irreducible plane curve $C$, the \emph{dual curve} $C^{\ast} \subset (\mathbb{P}^{2})^{\ast}$ parametrizes tangent lines to $C$. The set of $\mathbb{F}_q$-points on this curve, $C^{\ast}(\mathbb{F}_q)$, therefore represents those $\mathbb{F}_q$-lines that are tangent to $C$ at some point $P\in C(\overline{\mathbb{F}_q})$. 

The proof of Theorem~\ref{thm:conics} combines these two tools.

\begin{proof}[Proof of Theorem~\ref{thm:conics}]
Suppose $\{C_1, C_2, \ldots, C_{\ell}\}$ is a collection of geometrically irreducible conics that block all lines in $\bP^2(\F_q)$. In particular, the union of these conics intersects with lines of the form $bx+cy-z=0$ with $b,c\in \F_q$. The following claim provides a simple criterion for whether such a line is skew to a given conic.
\begin{claim}\label{claim:nonsquare}
Let $C$ be a geometrically irreducible conic defined over $\F_q$:
$$C:a_{200}x^{2}+a_{020}y^{2}+a_{002}z^{2} + a_{110}xy+a_{101}xz+a_{011}yz=0.$$
Consider the polynomial $D(\alpha,\beta)\in \F_q[\alpha,\beta]$ defined by:
    $$
    D=(2a_{002}\alpha \beta+a_{110}+a_{101}\beta+a_{011}\alpha)^{2} 
-4(a_{200}+a_{002} \alpha^{2}+a_{101}\alpha)(a_{020}+a_{002}\beta^{2}+a_{011}\beta).
    $$
Then for each $b,c\in \F_q$, $C$ does not intersect the line $L\colon bx+cy-z=0$ at an $\F_q$-point provided the following two conditions hold:
\begin{enumerate}
    \item $(a_{200}+a_{002}b^{2}+a_{101}b)(a_{020}+a_{002}c^{2}+a_{011}c)\neq 0$.
    \item $D(b,c)$ is a non-square in $\F_q$.
\end{enumerate}
In particular, for all but at most $4q$ pairs $(b,c)\in \F_q \times \F_q$, the line $L\colon bx+cy-z=0$ is skew to $C$ if $D(b,c)$ is a non-square in $\F_q$.
\end{claim}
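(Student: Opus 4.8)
The plan is to parametrize the line and reduce the intersection problem to a statement about a single binary quadratic form. Since $L\colon bx+cy-z=0$ is the image of the embedding $[x:y]\mapsto[x:y:bx+cy]$, every point of $L$ has the form $[x:y:bx+cy]$ with $[x:y]\in\bP^1$. I would substitute $z=bx+cy$ into the equation of $C$ and collect terms, obtaining a binary quadratic form
$$Q(x,y)=Ax^2+Bxy+\Gamma y^2,$$
where $A=a_{200}+a_{002}b^2+a_{101}b$, $\Gamma=a_{020}+a_{002}c^2+a_{011}c$, and $B=2a_{002}bc+a_{011}b+a_{101}c+a_{110}$. A direct check shows that the two factors appearing in condition (1) are precisely $A$ and $\Gamma$, and that $D(b,c)=B^2-4A\Gamma$ is exactly the discriminant of $Q$. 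The $\F_q$-points of $C\cap L$ then correspond bijectively to the nontrivial $\F_q$-zeros of $Q$ in $\bP^1$.

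The key algebraic step is the classical fact that, in odd characteristic, a nonzero binary quadratic form has a nontrivial $\F_q$-zero if and only if its discriminant is a square in $\F_q$. Assuming condition (1), we have $A\neq 0$, so $[1:0]$ is not a zero and every zero of $Q$ has $y\neq 0$; setting $t=x/y$ reduces $Q=0$ to $At^2+Bt+\Gamma=0$, which has a root in $\F_q$ exactly when $D(b,c)=B^2-4A\Gamma$ is a square. Hence if $D(b,c)$ is a non-square, $Q$ has no nontrivial $\F_q$-zero and $C$ meets $L$ in no $\F_q$-point, establishing the first assertion. I would also note that $A=0$ or $\Gamma=0$ forces $D=B^2$ to be a square, so condition (2) in fact already implies condition (1); the role of (1) is solely to control the count in the final sentence.

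For the ``in particular'' statement, I would bound the number of pairs $(b,c)$ for which condition (1) fails, that is, where $A=0$ or $\Gamma=0$. Viewing $A=a_{002}b^2+a_{101}b+a_{200}$ as a polynomial in $b$, it is not identically zero: otherwise $a_{002}=a_{101}=a_{200}=0$, and the conic degenerates to $y(a_{020}y+a_{110}x+a_{011}z)=0$, contradicting geometric irreducibility. Thus $A$ vanishes for at most two values of $b$, giving at most $2q$ pairs; the same argument applied to $\Gamma=a_{002}c^2+a_{011}c+a_{020}$ yields at most $2q$ further pairs, for a total of at most $4q$ excluded pairs. For every remaining pair condition (1) holds, so the first part shows that a non-square value of $D(b,c)$ makes $L$ skew to $C$.

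The substitution and discriminant computation are routine, so the only point requiring genuine care is the degenerate analysis: one must verify that geometric irreducibility prevents the quadratics $A(b)$ and $\Gamma(c)$ from vanishing identically (which is what pins the count down to $4q$), and one must handle the boundary cases $A=0$ and $\Gamma=0$ correctly in the quadratic-form argument rather than dividing by $A$ unconditionally. I expect this degenerate bookkeeping, rather than the main reduction, to be the only real obstacle.
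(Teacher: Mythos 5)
Your proof is correct and follows essentially the same route as the paper's: substitute $z=bx+cy$ to get a binary quadratic form whose discriminant is $D(b,c)$, and use geometric irreducibility to show that each of the two coefficient polynomials in condition (1) vanishes for at most two parameter values, giving the $4q$ bound. Your extra observation that $A=0$ or $\Gamma=0$ forces $D=B^2$ to be a square (so condition (2) already implies condition (1)) is a valid small refinement not made in the paper, but it does not change the argument.
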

\begin{poc}
First, note that the equation $a_{200}+a_{002}\alpha^{2}+a_{101}\alpha=0$ has at most two solutions in $\F_q$. Otherwise, the polynomial in $\alpha$ would be identically zero, so  $a_{200}=a_{002}=a_{101}=0$; in this case $C$ is reducible with a factor of $y$. Similarly, the equation $a_{020}+a_{002}\beta^{2}+a_{011}\beta=0$ has at most two solutions in $\F_q$. Thus, condition (1) holds for all but at most $4q$ pairs $(b,c)\in \F_q \times \F_q$. 

Next, we compute $C \cap L$. Substituting $z=bx+cy$ into the equation of $C$ gives: 
$$a_{200}x^{2}+a_{020}y^{2}+a_{002}(bx+cy)^{2} +a_{110}xy+a_{101}x(bx+cy)+a_{011}y(bx+cy)=0,$$
which simplifies to
\begin{equation}\label{eq:intersection}
(a_{200}+a_{002} b^{2}+a_{101}b)x^{2} +(2a_{002}bc+a_{110}+a_{101}c+a_{011}b)xy +(a_{020}+a_{002}c^{2}+a_{011}c)y^{2}=0.    
\end{equation}
If $(a_{200}+a_{002}b^{2}+a_{101}b)(a_{020}+a_{002}c^{2}+a_{011}c)\neq 0$, then equation~\eqref{eq:intersection} has a solution over $\F_q$ only when its discriminant 
\begin{align*}
(2a_{002}bc+a_{110}+a_{101}c+a_{011}b)^{2}-4(a_{200}+a_{002}b^{2}+a_{101}b)(a_{020}+a_{002}c^{2}+a_{011}c)
\end{align*} 
is a square in $\F_q$. This proves the claim.
\end{poc}

For each $1\leq i \leq \ell$, let $D_i(\alpha,\beta)$ be the corresponding polynomial of the conic $C_i$ defined in the above claim. A point $[t_0:t_1:t_2] \in (\mathbb{P}^2)^*$ in the dual plane corresponds to the line with equation $t_0x+t_1y+t_2z=0$ in $\mathbb{P}^2$. From the theory of dual curves, $D_i(\alpha,\beta)=0$ represents the affine model of the dual curve $C_i^{*}$. More precisely, $\{D_i(\alpha,\beta)=0\} \subseteq \mathbb{A}^{2}_{\alpha,\beta}$ is the restriction of $C_i^{*}$ to the affine chart $t_{2}=1$. In particular, $D_i\in \F_q[\alpha,\beta]$ is an irreducible polynomial of degree $2$. Moreover, for $1\leq i<j\leq \ell$, $C_i$ and $C_j$ are distinct conics, so their dual curves are also distinct, that is, $C_{i}^{*}$ and $C_{j}^{*}$ are distinct and thus $D_i$ and $D_j$ are distinct in the sense that $D_j \neq \lambda D_i$ for any $\lambda \in \F_q$. Therefore, for any nonempty subset $I \subseteq \{1,2,\ldots, \ell\}$, the polynomial $\prod_{i \in I} D_i$ is not a constant multiple of the square of a polynomial in $\F_q[\alpha,\beta]$. 

Thus, Lemma~\ref{lem:S_LW} implies that the number of pairs $(b,c)\in \F_q \times \F_q$ such that $D_i(b,c)$ is a non-square for all $1 \le i \le \ell$ is at least
$$
\frac{q^2}{2^\ell}-K (4^{2\ell} q^{3/2}+4^{13\ell/3}q),
$$
where $K$ is an absolute constant. It follows from Claim~\ref{claim:nonsquare} that the number of pairs $(b,c)\in \F_q \times \F_q$ for which the line $L\colon bx+cy-z=0$ is simultaneously skew to the conics $C_1,C_2, \ldots, C_{\ell}$ is at least
$$
\frac{q^2}{2^\ell}-K(4^{2\ell} q^{3/2}+4^{13\ell/3}q)- 4q\ell;
$$
however, by the blocking set assumption, no such line exists. It follows that
$$
\frac{q^2}{2^\ell}-K(4^{2\ell} q^{3/2}+4^{13\ell/3}q)- 4q\ell \leq 0,
$$
that is, $\ell \geq c\log q$ for some absolute positive constant $c$.
\end{proof}

\begin{remark}\label{rem:S}
In the final remark of \cite{Szo92b}, Sz\H{o}nyi suggested a proof along the following lines, which shares some similarities with our proof. We now explain the details implicit in his remark. We fix a point $P\in\mathbb{P}^2(\mathbb{F}_q)$ (to be specified) and consider all the $q+1$ $\mathbb{F}_q$-lines in $\mathbb{P}^2$ that pass through $P$. Call these lines $L_1, L_2, \dots, L_{q+1}$. The condition that $L_i$ is skew to a given conic can be expressed as a certain single-variable quadratic function achieving a nonsquare value (this step requires some verification). Using Weil's bound, one can show that if the collection contains fewer than $c_0\log q$ conics, then at least one line $L_i$ through $P$ is skew to all of them. To apply Weil's bound, one needs to be careful that no nonempty subcollection of these single-variable polynomials has a product equal to a constant multiple of a square of a polynomial. To rule out this scenario, one needs to find a point $P$ such that none of the $q+1$  lines through $P$ is tangent to more than one conic. The difference between Sz\H{o}nyi's method and our proof is that we need not reduce to a single-variable polynomial, as we can rely on Lemma~\ref{lem:S_LW}.
\end{remark}

\section{Blocking families and Chebotarev Density Theorem}\label{sec:CDT}

This section is devoted to proving Theorem~\ref{thm:intro-unbounded}. Throughout the section, we assume that $q$ is odd. We establish a more general result (Theorem~\ref{thm:bounded-number-of-curves} below), showing that under a mild hypothesis, any bounded collection of curves fails to form a blocking set for sufficiently large $q$. Theorem~\ref{thm:intro-unbounded} will then follow as a corollary. 

\begin{theorem}\label{thm:bounded-number-of-curves}
Let $C\subset \mathbb{P}^2$ be a plane curve of degree $\widetilde{d}$ defined over $\mathbb{F}_q$. Suppose each geometrically irreducible component of $C$ has degree at least 2 and is reflexive. Then $C(\F_q)$ is not a blocking set in $\bP^2(\F_q)$ for $q$ sufficiently large with respect to $\widetilde{d}$. 
\end{theorem}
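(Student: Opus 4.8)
The plan is to show that, for $q$ large, there exists an $\F_q$-line $L$ meeting $C$ only in points that are not $\F_q$-rational; since no line is a component of $C$ (each component has degree $\ge 2$), such an $L$ avoids $C(\F_q)$ entirely and witnesses that $C(\F_q)$ fails to be a blocking set. Write $C=C_1\cup\cdots\cup C_r$ with the $C_i$ the distinct geometrically irreducible components, of degrees $d_i\ge 2$ summing to $\widetilde d$. I would parametrize lines by the dual plane, fix a general point $P\in\bP^2(\F_q)$, and work with the pencil of the $q+1$ lines through $P$; intersecting $C$ with this pencil yields a branched cover of $\bP^1$ of degree $\widetilde d$. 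For general $P$ all but $O_{\widetilde d}(1)$ of these lines are transversal to $C$ at $\widetilde d$ distinct smooth points, and by a Bertini-type theorem the monodromy group $G$ of this cover equals the monodromy of the full family of lines; it is a subgroup of $S_{d_1}\times\cdots\times S_{d_r}$ surjecting onto the factor $G_i\le S_{d_i}$ recording the action on $C_i\cap L$.

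The first key input is that each $G_i$ is the full symmetric group $S_{d_i}$. Geometric irreducibility of $C_i$ makes $G_i$ transitive. Reflexivity of $C_i$ is precisely the condition guaranteeing that the Gauss map is separable and generically one-to-one onto the dual curve $C_i^{*}$, so a general tangent line of $C_i$ is a simple tangent; such a line contributes a transposition to $G_i$, and the tangency correspondence is connected because $C_i$ is irreducible. A transitive permutation group generated by a connected system of transpositions is the full symmetric group, so $G_i=S_{d_i}$. (This is the positive-characteristic analogue of Harris's uniform position principle; note that when $p>d$, as in the application to Theorem~\ref{thm:intro-unbounded}, reflexivity is automatic.)

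The heart of the argument is to produce $g=(\sigma_1,\dots,\sigma_r)\in G$ that is fixed-point-free in every coordinate, for then Chebotarev forces the desired skew line. I would exhibit the element whose $i$-th coordinate is a $d_i$-cycle, via two facts about the joint monodromy. First, the sign characters $\sigma\mapsto\operatorname{sgn}(\sigma_i)$ are independent on $G$, so the total sign map $G\to(\mathbb{Z}/2)^r$ is onto; this is the exact generalization of the conic computation in Claim~\ref{claim:nonsquare}, where independence follows because the branch divisors of the covers are the distinct irreducible dual curves $C_i^{*}$, whence no product over a nonempty subset of the associated discriminants is a square. Second, $G$ contains $A_{d_1}\times\cdots\times A_{d_r}$: for distinct components the covers are linearly disjoint, so no diagonal identifications occur among the alternating monodromies, the only delicate case being components of equal degree, which is ruled out by $C_i\neq C_j$. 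Granting these, choose $h\in G$ with $\operatorname{sgn}(h_i)=(-1)^{d_i-1}$ for all $i$, pick a $d_i$-cycle $c_i$ for each $i$, and note $a:=(c_ih_i^{-1})_i\in\prod_i A_{d_i}\subseteq G$; then $ah=(c_1,\dots,c_r)\in G$ is fixed-point-free in every coordinate. I expect this joint-monodromy analysis, in particular verifying $\prod_i A_{d_i}\subseteq G$ and eliminating diagonals for equal-degree components, to be the main obstacle.

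Finally, I would apply Entin's effective Chebotarev density theorem \cite{Ent21} to the cover of $\bP^1$ attached to the pencil through $P$. A positive proportion, bounded below by $|g^{G}|/|G|>0$, of the fibers have geometric Frobenius in the conjugacy class of the simultaneous-derangement element $g$; for such a fiber the corresponding line through $P$ meets $C$ in no $\F_q$-point. The effective error is $O_{\widetilde d}(q^{1/2})$, dominated by the main term $\gg q$ once $q$ is large relative to $\widetilde d$, and the $O_{\widetilde d}(1)$ non-transversal lines are harmless. Hence a skew $\F_q$-line exists and $C(\F_q)$ is not blocking. (One should check that the field of constants of the Galois closure is $\F_q$, so that geometric Frobenius classes exhaust $G$; this holds because $C$, hence the cover, is defined over $\F_q$, after absorbing a bounded constant-field extension into the ``$q$ large'' hypothesis.)
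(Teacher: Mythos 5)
Your overall strategy is the same as the paper's: translate ``skew line'' into ``Frobenius acts on $C\cap L$ as a simultaneous derangement'' and produce such a line via Entin's Chebotarev theorem, using the conjugacy class of a product of full cycles. Two comments. First, most of your second and third paragraphs re-derive the monodromy computation (transitivity, transpositions from reflexivity, sign independence, $\prod_i A_{d_i}\subseteq G$ via linear disjointness) that Entin's theorem, as cited in the paper, already packages: his statement asserts equidistribution of $\operatorname{Frob}(C\cap L)$ over conjugacy classes of the \emph{full} product $S_{d_1}\times\cdots\times S_{d_m}$ for a reflexive curve with geometrically irreducible components. The step you honestly flag as ``the main obstacle'' (eliminating diagonal identifications between equal-degree components) is precisely what you would avoid by citing the theorem rather than reproving it; also, your restriction to a pencil through a general point $P$ adds a Bertini-type reduction that the two-dimensional dual-plane version does not need.

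Second, there is a genuine gap: the theorem's hypothesis is on the \emph{geometrically} irreducible components, which need not individually be defined over $\mathbb{F}_q$. If $C$ has an $\mathbb{F}_q$-irreducible component that splits over $\overline{\mathbb{F}_q}$ into several conjugate geometrically irreducible pieces, then Frobenius permutes the corresponding blocks of $C\cap L$, so the arithmetic Frobenius is not an element of $\prod_i S_{d_i}$ acting coordinatewise, and your ``fixed-point-free in every coordinate'' framework does not apply to those factors. Your parenthetical fix --- absorbing a constant-field extension into the ``$q$ large'' hypothesis --- does not work, since $q$ is fixed and the extension depends on $C$, not only on $\widetilde d$. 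The paper handles this by separating the $\mathbb{F}_q$-irreducible components into those that are geometrically irreducible (Entin applies) and those that are not, and disposing of the latter with the B\'ezout bound $|C_i(\mathbb{F}_q)|\le d_i^2/4$, so that they block at most $\frac{d_i^2}{4}(q+1)$ lines; this is the piece your argument is missing, and it is needed to cover the full generality of the statement.
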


To prove that $C(\mathbb{F}_q)$ is not a blocking set, it suffices to demonstrate the existence of at least one $\mathbb{F}_q$-line $L$ that is skew to $C$, meaning $(L \cap C)(\mathbb{F}_q) = \emptyset$. The existence of such a skew line is an arithmetic question over $\mathbb{F}_q$, but it can be studied by analyzing the geometry of the intersection over the algebraic closure $\overline{\F_q}$.

To build this connection, let us first consider the case where $C$ is a single geometrically irreducible curve of degree $d$. For a transverse $\mathbb{F}_q$-line $L$, the intersection $L \cap C$ consists of $d$ distinct points in $\mathbb{P}^2(\overline{\F_q})$. The geometric Frobenius map, $\sigma\colon [x:y:z]\mapsto [x^q:y^q:z^q]$, permutes these $d$ points because both $L$ and $C$ are defined over $\mathbb{F}_q$. This permutation partitions the set of intersection points into orbits. Each orbit corresponds to a set of roots of an irreducible polynomial over $\mathbb{F}_q$, and the size of the orbit is the degree of that polynomial. The partition of the integer $d$ into the sizes of these orbits is the cycle type of the permutation, which defines a unique conjugacy class in the symmetric group $S_d$. We denote this class by $\operatorname{Frob}(C \cap L)$.

This framework extends naturally to a reducible curve whose $\mathbb{F}_q$-irreducible components are geometrically irreducible. Let $C = \bigcup_{i=1}^m C_i$ be a plane curve with geometrically irreducible components $C_i$ of degree $d_i$. For a transverse $\mathbb{F}_q$-line $L$, the Frobenius map again permutes the intersection points. Since each $C_i$ is defined over $\mathbb{F}_q$, the action preserves the subsets $L \cap C_i$. We can therefore analyze the permutation on each subset independently. The action on the $d_i$ points of $L \cap C_i$ defines a conjugacy class in $S_{d_i}$ as described above. Taken together, the total permutation defines a conjugacy class in the product group $S_{d_1}\times \cdots \times S_{d_m}$.

Crucially, a point in the intersection $C\cap L$ is defined over $\mathbb{F}_q$ if and only if it is a fixed point of the Frobenius permutation. Therefore, a line $L$ is skew to $C$ if and only if its associated Frobenius action is a \emph{derangement} (a permutation with no fixed points). Our task is now translated into an arithmetic-geometric one: counting lines whose Frobenius action corresponds to a derangement.

To count these lines, we use a version of the Chebotarev density theorem, due to Entin \cite[Theorem 1]{Ent21}. The hypothesis of Entin's theorem depends on a technical condition known as reflexivity. A plane curve $C$ is called \emph{reflexive} if a generic tangent line to $C$ has contact of order exactly $2$ (i.e., is not a flex) and is tangent at a unique point (i.e., not bitangent). Every geometrically irreducible plane curve of degree $d$ is reflexive when the characteristic $p$ satisfies $p>d$ (see \cite[p.~5]{Hef89}). This motivates the hypothesis in Theorem~\ref{thm:intro-unbounded}. Entin's theorem is stated for the slightly more general condition of \emph{quasireflexivity} to handle cases in characteristic $2$, but since we assume $q$ is odd in this section, the reflexivity condition is sufficient for our purposes. Indeed, when $\operatorname{char}(\mathbb{F}_q)$ is odd, the notions of reflexivity and quasireflexivity are equivalent \cite[Proposition 2.1]{Ent21}.

\begin{theorem}[Entin]\label{thm:entin}
Let $C\subset \mathbb{P}^2$ be a reflexive plane curve of degree $\tilde{d}$ defined over $\mathbb{F}_q$. Suppose the irreducible components of $C$ are $C_1, \dots, C_m$ where each $C_i$ is geometrically irreducible and $\deg(C_i)=d_i$ for $1\leq i\leq m$. Let $U\subseteq(\mathbb{P}^2)^{\ast}$ denote the open subset of lines not tangent to $C$. Let $\mathcal{C}$ be a conjugacy class in the product group $S_{d_1}\times \cdots \times S_{d_m}$. Then
\[
|\{L\in U(\mathbb{F}_q) : \operatorname{Frob}(C\cap L)=\mathcal{C}\}| = \frac{|\mathcal{C}|}{|S_{d_1}\times S_{d_2}\times\cdots \times S_{d_m}|} q^2 \left(1+O_{\widetilde{d}}(q^{-1/2})\right).
\]
\end{theorem}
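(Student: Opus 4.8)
The plan is to recognize the left-hand count as an instance of the function-field Chebotarev density theorem for a finite \'etale cover of $U$, so that everything reduces to a monodromy computation followed by an equidistribution estimate with square-root cancellation. First I would form the universal intersection scheme over $U$,
\[
Z=\{(L,P)\in U\times\mathbb{P}^2 : P\in C\cap L\},\qquad \pi\colon Z\to U,
\]
where $\pi$ is the first projection. Since every $L\in U$ is transverse to $C$, it meets $C$ in exactly $\widetilde d$ distinct smooth points, so $\pi$ is finite \'etale of degree $\widetilde d$. The conjugacy class $\operatorname{Frob}(C\cap L)$ attached to a point $L\in U(\mathbb{F}_q)$ is precisely the Frobenius class of $L$ relative to this cover.

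Because $C=\bigcup_{i=1}^m C_i$ with each $C_i$ defined over $\mathbb{F}_q$, the Frobenius permutation preserves each subset $L\cap C_i$, and correspondingly $\pi$ factors as a fiber product $Z=Z_1\times_U\cdots\times_U Z_m$ with $Z_i\to U$ finite \'etale of degree $d_i$. Thus both the geometric monodromy group $G^{\mathrm{geom}}$ and the arithmetic monodromy group $G^{\mathrm{arith}}$ of $\pi$ are contained in the ambient product $S_{d_1}\times\cdots\times S_{d_m}$. The heart of the argument is to show that $G^{\mathrm{geom}}$ is \emph{all} of this product; once this is established, the inclusions $G^{\mathrm{geom}}\subseteq G^{\mathrm{arith}}\subseteq S_{d_1}\times\cdots\times S_{d_m}$ collapse, giving $G^{\mathrm{arith}}=G^{\mathrm{geom}}=S_{d_1}\times\cdots\times S_{d_m}$ and eliminating any constant-field-extension subtlety.

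To compute $G^{\mathrm{geom}}$ I would use that it is generated by the images of the inertia groups at the branch divisor, namely the union of the dual curves $C_i^{\ast}$; this one reduces to the familiar one-dimensional case by restricting to a general pencil of lines. Here reflexivity enters decisively: a generic tangent line to $C$ is tangent at a single point with contact order exactly $2$ (not a flex, not bitangent), and since the components are distinct their dual curves $C_i^{\ast}$ are distinct, so a generic such line is tangent to exactly one component. Hence every inertia generator is a \emph{transposition supported in a single factor} $S_{d_i}$. Elements supported in distinct factors commute, so the subgroup they generate is the direct product of the subgroups they generate factor-by-factor. Within the $i$-th factor, the cover $Z_i$ is irreducible---it fibers over the irreducible curve $C_i$ with fibers open in the $\mathbb{P}^1$ of lines through a point---so the monodromy acts transitively on the $d_i$ sheets; a transitive subgroup of $S_{d_i}$ generated by transpositions is all of $S_{d_i}$. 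Combining these, $G^{\mathrm{geom}}=S_{d_1}\times\cdots\times S_{d_m}$.

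Finally, with $G:=S_{d_1}\times\cdots\times S_{d_m}$ equal to both monodromy groups, I would invoke the function-field Chebotarev density theorem for the Galois closure of $\pi$: applying the Grothendieck--Lefschetz trace formula to the $\ell$-adic sheaf singled out by $\mathcal{C}$ and using Deligne's Riemann Hypothesis yields
\[
|\{L\in U(\mathbb{F}_q):\operatorname{Frob}(C\cap L)=\mathcal{C}\}|=\frac{|\mathcal{C}|}{|G|}\,|U(\mathbb{F}_q)|+O_{\widetilde d}\!\left(q^{3/2}\right),
\]
the square-root saving coming from the weight bound and the implied constant controlled by Betti numbers bounded in terms of $\widetilde d$. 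Since $U$ is the complement in $(\mathbb{P}^2)^{\ast}\cong\mathbb{P}^2$ of dual curves of total degree $O(\widetilde d^{2})$, one has $|U(\mathbb{F}_q)|=q^2+O_{\widetilde d}(q)$, and substituting gives the asserted formula. The main obstacle is the big-monodromy step: extracting a single-factor transposition from reflexivity (which is exactly why the hypothesis is imposed, and why characteristic $2$ forces the weaker quasireflexivity) and verifying that inertia is supported on one component at a time; the equidistribution estimate is then a standard black box.
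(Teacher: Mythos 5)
This statement is not proved in the paper at all: Theorem~\ref{thm:entin} is imported as a black box from Entin's work \cite{Ent21}, so there is no internal proof to compare against. Your sketch is, in outline, a faithful reconstruction of the strategy of the cited source itself: realize $\operatorname{Frob}(C\cap L)$ as the Frobenius class of the finite \'etale cover given by the universal line section, prove big monodromy $G^{\mathrm{geom}}=S_{d_1}\times\cdots\times S_{d_m}$ using reflexivity to produce inertia transpositions supported in a single factor (distinctness of the dual curves $C_i^{\ast}$ ruling out correlated ramification between factors), observe that $G^{\mathrm{geom}}\subseteq G^{\mathrm{arith}}\subseteq \prod_i S_{d_i}$ then collapses, and finish with an explicit Chebotarev/equidistribution estimate with square-root cancellation and constants controlled by $\widetilde{d}$. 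The group-theoretic endgame (transitive on each factor, generated by single-factor transpositions, hence the full product) is correct.

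Two steps in your sketch deserve more care than you give them, though neither is fatal. First, the set $U$ as literally stated (lines not tangent to $C$) must additionally exclude lines through singular points of $C$ --- in particular through points of $C_i\cap C_j$ --- for $\pi\colon Z\to U$ to be finite \'etale of degree $\widetilde{d}$; this excluded locus is a curve of degree $O_{\widetilde{d}}(1)$ in the dual plane, so it is absorbed into the error term, but your assertion ``every $L\in U$ is transverse to $C$'' silently assumes this corrected definition. Second, your claim that $G^{\mathrm{geom}}$ is ``generated by the images of the inertia groups at the branch divisor'' is a characteristic-zero reflex that needs justification in characteristic $p$: the correct argument is that the quotient of $\pi_1(U)$ by the normal closure of all inertia classifies covers extending \'etale in codimension one over $(\mathbb{P}^2)^{\ast}$, hence \'etale everywhere by Zariski--Nagata purity, hence trivial since $\mathbb{P}^2$ is algebraically simply connected (alternatively, after restricting to a general pencil, use $\pi_1(\mathbb{P}^1)=1$ and the identification of the restricted cover $Z_i$ with the projection of $C_i$ from the base point). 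Moreover, the identification of the inertia image at a generic point of $C_i^{\ast}$ with a transposition uses tameness of the order-$2$ local extension, i.e.\ $p\neq 2$; this is exactly the point where characteristic $2$ forces Entin's quasireflexivity hypothesis, as you correctly anticipate. With these repairs, and with the routine remark that the Betti numbers of the Galois closure are bounded in terms of $\widetilde{d}$ (which is what makes the implied constant depend only on $\widetilde{d}$), your outline is a sound proof of the quoted theorem.
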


We illustrate Theorem~\ref{thm:entin} in the special case where the conjugacy class in $S_{d_1}\times \cdots \times S_{d_m}$ corresponds to special derangements. More precisely, let $\mathcal{C}$ be the conjugacy class of derangements corresponding to a product of cycles of full length, i.e., a $d_i$-cycle in each component $S_{d_i}$. The size of this conjugacy class is 
$
|\mathcal{C}| = \prod_{i=1}^{m}(d_i-1)!.
$
Applying Theorem~\ref{thm:entin}, the number of transverse $\mathbb{F}_q$-lines $L$ for which $\operatorname{Frob}(C\cap L)=\mathcal{C}$ is 
\begin{equation}\label{eq:lbC}
\frac{|\mathcal{C}|}{d_1! \cdot d_2! \cdots d_{m}!} q^2 \left(1+O_{\widetilde{d}}(q^{-1/2})\right) = \left(\prod_{i=1}^{m} \frac{1}{d_i}\right) q^2 \left(1+O_{\widetilde{d}}(q^{-1/2})\right).
\end{equation}
Since this quantity is positive for sufficiently large $q$, there must exist at least one line that is skew to all geometrically irreducible components $C_1, \dots, C_m$. In particular, for $q$ sufficiently large, we see that a positive fraction of $\mathbb{F}_q$-lines are skew to $C$. We now have the tools to prove the main technical result of this section.

\begin{proof}[Proof of Theorem~\ref{thm:bounded-number-of-curves}] Decompose $C$ into its irreducible components over $\mathbb{F}_q$ in the following way:
\[
C = C_1 \cup \cdots \cup C_{m} \cup C_{m+1} \cup \cdots \cup C_{r},
\]
where
\begin{itemize}
    \item $C_i$ is irreducible over $\mathbb{F}_q$ and $d_i=\deg(C_i)\geq 2$ for each $i\geq 1$.
    \item $C_i$ is irreducible over $\overline{\mathbb{F}_q}$ for each $i\leq m$, and $C_i$ is \emph{not} irreducible over $\overline{\mathbb{F}_q}$ for each $i>m$. 
\end{itemize}
By hypothesis, $C_i$ is reflexive for each $1\leq i\leq m$. By the above discussion, the number of $\mathbb{F}_q$-lines that are skew to $C_i$ for each $1\leq i\leq m$ is at least the quantity given by equation~\eqref{eq:lbC}. On the other hand, for each $m+1\leq i\leq r$, the curve $C_i$ is irreducible over $\mathbb{F}_q$ but not geometrically irreducible. For these curves, a standard application of B\'ezout's theorem shows that $|C_i(\mathbb{F}_q)|\leq \frac{d_i^2}{4}$ (see, for example, \cite[Lemma 3.1]{AGY23}). Consequently, for each $m+1\leq i\leq r$, all but at most $\frac{d_i^2}{4}(q+1)$ lines are skew to the curve $C_i$. Thus, the number of skew lines to $C$ is at least:
\begin{align}
\left(\prod_{i=1}^{m} \frac{1}{d_i}\right) q^2 \left(1+O_{\widetilde{d}}(q^{-1/2})\right) - \frac{q+1}{4}\sum_{i=m+1}^{r} d_i^2 \geq \frac{1}{\widetilde{d}^m} q^2 \left(1+O_{\widetilde{d}}(q^{-1/2})\right) - \frac{(q+1)\widetilde{d}^2}{4}.\label{eq:lower-bound-skew}
\end{align}
For $q$ sufficiently large with respect to $\widetilde{d}$, the lower bound \eqref{eq:lower-bound-skew} yields a positive fraction of $\mathbb{F}_q$-lines $L$ for which $(L\cap C)(\mathbb{F}_q)=\emptyset$. In particular, $C(\mathbb{F}_q)$ is not a blocking set. 
\end{proof}

We are now equipped to prove Theorem~\ref{thm:intro-unbounded}.

\begin{proof}[Proof of Theorem~\ref{thm:intro-unbounded}]
The result is a direct consequence of Theorem~\ref{thm:bounded-number-of-curves}. If $\ell(q)$ were bounded by a constant, then the corresponding union of curves would have a total degree bounded by a constant. The condition $q \in \mathcal{Q}_d$ ensures that each component curve is reflexive, so for $q$ sufficiently large, Theorem~\ref{thm:bounded-number-of-curves} implies this union cannot be a blocking set, a contradiction.
\end{proof}

\begin{remark}
The hypothesis $p>d$ in Theorem~\ref{thm:intro-unbounded} ensures that the component curves are reflexive. This condition $p>d$ can be relaxed for families of \emph{nonsingular} curves. By a result of Pardini~\cite{Par86}, a nonsingular curve of degree $d$ is nonreflexive only if $d \equiv 1 \pmod p$. Thus, the weaker condition $p \nmid (d-1)$ is sufficient to guarantee reflexivity for nonsingular curves. The conclusion of Theorem~\ref{thm:intro-unbounded} therefore holds if each curve $C_i$ is nonsingular and its degree $d_i$ satisfies $p \nmid (d_i-1)$.
\end{remark}

\section{First Proof of Theorem~\ref{thm:main}: randomized construction}\label{sec:random}
In this section, we give a non-constructive proof of Theorem~\ref{thm:main}. We rely on a covering lemma due to S. K. Stein~\cite{Ste74}, stated as in \cite[Lemma 2.3]{GS08}, whose proof uses a randomized construction.

\begin{lemma}[Stein]\label{lem:Stein}
Consider a bipartite graph with bipartition $A\cup B$. Let $\delta$ be the minimum degree of a vertex in $A$. If $|A|\geq 2$, then there is a set $B'\subseteq B$ such that
\begin{equation}\label{ineq:covering-bound}
|B'| \leq \Big\lceil |B| \frac{\log|A|}{\delta}\Big\rceil 
\end{equation}
and $B'$ dominates $A$ (that is, for each $a\in A$, there is $b\in B'$ such that $a$ and $b$ are adjacent).
\end{lemma}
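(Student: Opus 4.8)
The plan is to prove this by the probabilistic method, following the standard random-sampling-plus-repair paradigm. First I would dispose of the trivial cases. Since a vertex of $A$ with no neighbor can never be dominated, the statement is only meaningful when $\delta\geq 1$; and if $\frac{\log|A|}{\delta}\geq 1$, then the right-hand side of \eqref{ineq:covering-bound} is at least $|B|$, so taking $B'=B$ (which dominates $A$ because every $a\in A$ has a neighbor) already works. Thus I may assume $p:=\frac{\log|A|}{\delta}\in(0,1)$, where $\log$ denotes the natural logarithm; the hypothesis $|A|\geq 2$ is exactly what guarantees $p>0$.

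The core step is to select a random subset $R\subseteq B$ by including each vertex of $B$ independently with probability $p$, and then to \emph{repair} the vertices of $A$ that $R$ fails to dominate. For a fixed $a\in A$, the probability that none of its neighbors lands in $R$ is $(1-p)^{\deg(a)}\leq (1-p)^{\delta}$, since $\deg(a)\geq\delta$ and $0\leq 1-p\leq 1$. Letting $U$ be the random set of vertices of $A$ not dominated by $R$, linearity of expectation gives $\mathbb{E}[|U|]\leq |A|(1-p)^{\delta}$. Because each $a\in A$ has at least one neighbor in $B$, I can assign to every $a\in U$ a single neighbor $b(a)\in B$ and collect these into a set $S$ with $|S|\leq|U|$; then $B':=R\cup S$ dominates all of $A$ and $|B'|\leq|R|+|U|$.

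It then remains to bound $\mathbb{E}[|B'|]\leq \mathbb{E}[|R|]+\mathbb{E}[|U|]\leq p|B|+|A|(1-p)^{\delta}$ and to insert the convenient choice $p=\frac{\log|A|}{\delta}$, which is designed to push the expected number of uncovered vertices below $1$. Here the key inequality is the strict bound $(1-p)^{\delta}<e^{-p\delta}$, valid for $p\in(0,1)$ and $\delta\geq 1$ (from $1-t<e^{-t}$ for $t>0$); with this choice of $p$ it yields $|A|(1-p)^{\delta}<|A|e^{-\log|A|}=1$. Consequently $\mathbb{E}[|B'|]<\frac{|B|\log|A|}{\delta}+1$, and some realization of the random experiment achieves $|B'|\leq \mathbb{E}[|B'|]$.

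The one subtle point, and the only place the argument needs any care, is extracting the exact ceiling bound from this strict inequality. Writing $x=\frac{|B|\log|A|}{\delta}$, I have an integer $|B'|$ with $|B'|<x+1$. The largest integer strictly below $x+1$ equals $\lceil x+1\rceil-1=\lceil x\rceil$, using $\lceil x+1\rceil=\lceil x\rceil+1$, which delivers precisely $|B'|\leq\lceil x\rceil$ as required by \eqref{ineq:covering-bound}. The strictness of $(1-p)^{\delta}<e^{-p\delta}$ is exactly what prevents an off-by-one error in the borderline case where $x$ is an integer. I therefore expect the main obstacle to be not any deep idea but this rounding bookkeeping, together with a clean handling of the degenerate regimes $\delta\geq 1$ and $p\geq 1$ at the outset.
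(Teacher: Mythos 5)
Your proof is correct: it is the standard random-sampling-plus-greedy-repair argument, with the degenerate cases ($\delta\geq 1$, $p\geq 1$) and the ceiling bookkeeping handled properly. The paper does not reprove this lemma but cites Stein and notes that its proof "uses a randomized construction," which is exactly the argument you give, so your approach matches the intended one.
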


As preparation, we need a few lemmas. The first lemma is about an estimate on binomial coefficients.

\begin{lemma}\label{lem:convex}
Let $d\geq 2$. If $D\mid d$, then we have $D \binom{d/D+2}{2}\leq \binom{d+2}{2}$.
\end{lemma}
\begin{proof}
Observe that the function $\binom{t+2}{2}$ is strictly convex for real $t>0$. It follows that for all positive integers $n$ and $m$, we have $\binom{n+m+2}{2}-\binom{n+2}{2}> \binom{m+2}{2}-\binom{0+2}{2}=\binom{m+2}{2}-1$, that is, $\binom{n+m+2}{2}\geq \binom{n+2}{2}+\binom{m+2}{2}$. Repeatedly applying this inequality, we obtain $D \binom{d/D+2}{2}\leq \binom{d+2}{2}$.
\end{proof}

The next lemma is a standard interpolation lemma; see, for example, \cite[Proposition 3.1]{AGY22b}.

\begin{lemma}\label{lem:independence}
Fix a finite field $\mathbb{F}_q$, and consider any $k$ distinct $\F_q$-points $P_1, P_2, \ldots, P_k$ in $\mathbb{P}^2$. If $d\geq k-1$, then passing through $P_1, P_2, \ldots, P_k$ imposes linearly independent conditions in the vector space of degree $d$ plane curves over $\F_q$. 
\end{lemma}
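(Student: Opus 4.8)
The plan is to reformulate the statement in terms of evaluation functionals and then reduce it to a separation property. Let $V_d$ denote the $\mathbb{F}_q$-vector space of homogeneous degree $d$ forms in $\mathbb{F}_q[x,y,z]$, of dimension $\binom{d+2}{2}$. Fixing a representative $(x_i,y_i,z_i)$ for each $P_i$, the condition ``passing through $P_i$'' is the vanishing of the linear functional $\mathrm{ev}_{P_i}\colon F\mapsto F(x_i,y_i,z_i)$ on $V_d$. These conditions are linearly independent precisely when the functionals $\mathrm{ev}_{P_1},\dots,\mathrm{ev}_{P_k}$ are linearly independent in the dual space $V_d^{\ast}$. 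By a standard duality argument, this is equivalent to exhibiting, for each index $i$, a form $G_i\in V_d$ with $G_i(P_j)=0$ for all $j\neq i$ and $G_i(P_i)\neq 0$; such a separating family shows that no $\mathrm{ev}_{P_i}$ lies in the span of the others.

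Next I would construct the separating forms from linear factors. For each pair $i\neq j$, since $P_i$ and $P_j$ are distinct points of $\mathbb{P}^2(\mathbb{F}_q)$, there are exactly $q+1$ lines defined over $\mathbb{F}_q$ through $P_j$, of which precisely one also passes through $P_i$; hence there exists a linear form $\ell_{ij}\in\mathbb{F}_q[x,y,z]$ with $\ell_{ij}(P_j)=0$ and $\ell_{ij}(P_i)\neq 0$. Setting $F_i=\prod_{j\neq i}\ell_{ij}$ produces a form of degree $k-1$ that vanishes at every $P_j$ with $j\neq i$, while $F_i(P_i)=\prod_{j\neq i}\ell_{ij}(P_i)\neq 0$.

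Finally I would adjust the degree to exactly $d$. Since $d\geq k-1$, I pick any linear form $\ell_i$ with $\ell_i(P_i)\neq 0$ (for instance a coordinate not vanishing at $P_i$) and set $G_i=F_i\cdot\ell_i^{\,d-(k-1)}$, a form of degree exactly $d$ that still vanishes at all $P_j$ with $j\neq i$ and is nonzero at $P_i$. This furnishes the required separating family $\{G_i\}_{i=1}^k$, completing the argument.

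I expect no serious obstacle here, as the result is standard; the only points requiring genuine care are that the separating linear forms $\ell_{ij}$ exist over the base field $\mathbb{F}_q$ rather than merely over $\overline{\mathbb{F}_q}$ (guaranteed by counting the $\mathbb{F}_q$-lines through $P_j$), and the clean reduction of ``linearly independent conditions'' to the existence of the forms $G_i$ via duality in $V_d^{\ast}$.
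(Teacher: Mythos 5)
Your proof is correct and is the standard interpolation argument: for each $P_i$, take a product of $k-1$ linear forms, each vanishing at one $P_j$ ($j\neq i$) but not at $P_i$, and pad with powers of a linear form nonvanishing at $P_i$ to reach degree $d$; the resulting separating family immediately gives linear independence of the evaluation functionals. The paper does not prove this lemma itself but cites it as a standard fact (\cite[Proposition 3.1]{AGY22b}), and your argument is exactly the expected one, with the only delicate points (existence of the separating linear forms over $\F_q$ itself, and the duality reformulation) handled correctly.
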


Next, we deduce the following corollary.

\begin{corollary}\label{cor:independence}
Let $d\geq 3$ and $N=\binom{d+2}{2}$. Uniformly for all $P \in \mathbb{P}^2(\F_q)$, the number of geometrically 
irreducible degree $d$ plane curves defined over $\F_q$ that pass through $P$ is 
$\frac{q^{N-1}-O_{d}(q^{N-2})}{q-1}$. 
\end{corollary}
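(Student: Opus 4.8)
The plan is to count \emph{forms} rather than curves. Let $V_d$ denote the $\F_q$-vector space of homogeneous polynomials of degree $d$ in $x,y,z$, so that $\dim_{\F_q} V_d = N$ and $V_d \cong \mathbb{A}^N$ as an $\F_q$-variety. A geometrically irreducible degree-$d$ curve through $P$ corresponds to exactly $q-1$ forms in $V_d$ (its nonzero scalar multiples, all of which are geometrically irreducible and vanish at $P$), so it suffices to count geometrically irreducible forms $f \in V_d$ with $f(P)=0$ and then divide by $q-1$.

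First I would count all nonzero forms vanishing at $P$. Writing $P=[p_0:p_1:p_2]\neq 0$, the condition $f(P)=0$ is a single nonzero linear equation in the coefficients of $f$ (for instance the coefficient of $x^d$ contributes $p_0^d$, which is nonzero whenever $p_0\neq 0$), so it defines a hyperplane in $V_d$. Thus exactly $q^{N-1}$ forms vanish at $P$, uniformly in $P$; this is the $k=1$ instance of Lemma~\ref{lem:independence}. Discarding the zero form leaves $q^{N-1}-1$ nonzero such forms.

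The main step is to bound the number of forms that fail to be geometrically irreducible. Every such nonzero form factors over $\overline{\F_q}$ as a product $g\cdot h$ with $1\leq \deg g = a\leq d-1$, so the locus $Z\subseteq V_d$ of non-geometrically-irreducible forms is the union, over $a=1,\dots,\lfloor d/2\rfloor$, of the images of the multiplication maps $V_a\times V_{d-a}\to V_d$. This is a Frobenius-stable closed subset defined over $\F_q$ of degree $O_d(1)$, and since the fibers of each multiplication map contain the one-parameter family $(\lambda g,\lambda^{-1}h)$, each image has dimension at most $\binom{a+2}{2}+\binom{d-a+2}{2}-1$. By the convexity inequality underlying Lemma~\ref{lem:convex}, the sum $\binom{a+2}{2}+\binom{d-a+2}{2}$ is largest at the endpoint $a=1$, where it equals $N-(d-2)$; hence $\dim Z \leq N-d+1\leq N-2$ because $d\geq 3$. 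An effective point count for varieties of bounded degree (Cafure--Matera \cite{CM06}, or Lang--Weil \cite{LW54}) then gives $|Z(\F_q)|=O_d(q^{N-2})$.

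Combining these, the number of geometrically irreducible forms of degree $d$ vanishing at $P$ is $(q^{N-1}-1)-|\{f\in Z:f(P)=0\}| = q^{N-1}-O_d(q^{N-2})$, using $|\{f\in Z:f(P)=0\}|\leq |Z(\F_q)|=O_d(q^{N-2})$; dividing by $q-1$ gives the stated count, and the estimate is uniform in $P$ because the hyperplane count is exact and the bound on $Z$ is independent of $P$. The one genuine obstacle is the dimension estimate for the reducible locus $Z$, which is exactly where Lemma~\ref{lem:convex} enters; the remaining ingredients are elementary linear algebra together with a standard point-counting bound.
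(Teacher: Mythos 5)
Your proof is correct, but the key step --- bounding the forms that fail to be geometrically irreducible --- is handled by a genuinely different mechanism than in the paper. The paper argues by elementary counting of polynomials over $\F_q$: it splits the bad forms into those reducible over $\F_q$ (written as $GH$ with the factor $G$ required to pass through $P$, which is where the paper saves its crucial extra factor of $q$) and those irreducible over $\F_q$ but geometrically reducible (which are norms $\operatorname{Norm}_{\F_{q^D}/\F_q}(G)$ of forms over extension fields, bounded via Lemma~\ref{lem:convex}). You instead treat the entire geometrically reducible locus $Z$ as a single closed subvariety of $V_d$ of bounded degree, and you save the needed factor of $q$ from the one-dimensional fibers of the multiplication map rather than from the vanishing-at-$P$ condition; the dimension count $\dim Z\leq N-d+1\leq N-2$ together with a Lang--Weil/Cafure--Matera point count then finishes. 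Your route buys a cleaner argument: there is no case split, the norm construction is unnecessary (forms irreducible over $\F_q$ but geometrically reducible still lie in $Z(\F_q)$ since $Z$ is defined over $\F_q$), and uniformity in $P$ is automatic because your bound on $|Z(\F_q)|$ does not reference $P$ at all. The cost is invoking a point-counting bound for varieties of bounded degree, where the paper's argument is entirely elementary. Both savings of a factor of $q$ are genuine, and either one suffices precisely because $d\geq 3$.
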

\begin{proof}
Fix a point $P\in \bP^2(\F_q)$. For each $1\leq j \leq d$, let $\mathcal{S}_j$ denote the set of degree $j$ homogeneous polynomials in $\F_q[x,y,z]$ (together with the zero polynomial) and let $\mathcal{T}_j \subseteq \mathcal{S}_j$ denote the subset of polynomials $F$ in $\mathcal{S}_j$ such that the curve $\{F=0\}$ passes through $P$. Note that for each $1\leq j \leq d$, we have $|\mathcal{S}_j|=q^{\binom{j+2}{2}}$ and  $|\mathcal{T}_j|=q^{\binom{j+2}{2}-1}$ by Lemma~\ref{lem:independence}. 

Let $\mathcal{R}_{d}\subseteq \mathcal{T}_{d}\setminus\{0\}$ denote the set of polynomials in $\mathcal{T}_d \setminus \{0\}$ that are reducible over $\F_q$. If $F\in \mathcal{R}_d$, then we can write $F=GH$ for some nonconstant polynomials $G, H$ such that the curve $\{G=0\}$ passes through $P$. It follows that
$$
|\mathcal{R}_d|\leq \sum_{j=1}^{d-1} |\mathcal{T}_j||\mathcal{S}_{d-j}|=\sum_{j=1}^{d-1} q^{\binom{j+2}{2}+\binom{d-j+2}{2}-1}\leq 2 \sum_{j=1}^{\lceil (d-1)/2 \rceil} q^{\binom{j+2}{2}+\binom{d-j+2}{2}-1}
$$
Since the function $\binom{t+2}{2}$ is strictly convex for real $t>0$, for each $1\leq j \leq d-2$, we have 
$\binom{d+2}{2}-\binom{d-j+2}{2}\geq \binom{j+2}{2}-\binom{0+2}{2}+d-j$, which implies that $N=\binom{d+2}{2}\geq \binom{j+2}{2}+\binom{d-j+2}{2}+1$. Since $d-2\geq \lceil (d-1)/2 \rceil$ for $d\geq 3$, it follows that
$$
|\mathcal{R}_d|\leq 2 \sum_{j=1}^{\lceil (d-1)/2 \rceil} q^{\binom{j+2}{2}+\binom{d-j+2}{2}-1} \leq d q^{N-2}.
$$
Let $\mathcal{G}_{d}\subseteq \mathcal{T}_{d}\setminus\{0\}$ denote the set of polynomials in $\mathcal{T}_d \setminus \{0\}$ that are irreducible over $\F_q$ but geometrically reducible. Note that if $F\in \mathcal{G}_d$, 
then necessarily $F=\operatorname{Norm}_{\F_{q^D}/\F_{q}} (G)$ for some $D \mid d$ with $D\geq 2$ and some polynomial $G$ with degree $d/D$ defined over $\F_{q^D}$ such that the curve $\{G=0\}$ passes through $P$. It follows from Lemma~\ref{lem:convex} and Lemma~\ref{lem:independence} that
$$
|\mathcal{G}_d|\leq \sum_{D \mid d, D\geq 2} (q^D)^{\binom{d/D+2}{2}-1} \leq q^{-2} \sum_{D \mid d, D\geq 2} q^{D\binom{d/D+2}{2}} \leq q^{-2} \sum_{D \mid d, D\geq 2} q^{\binom{d+2}{2}}\leq (d-1) q^{N-2}.
$$

Combining the two estimates above, the corollary follows.
\end{proof}

We are now ready to present our first proof of Theorem~\ref{thm:main}. The proof shows that $c_d$ can be taken arbitrarily close to $4$ when $q$ is sufficiently large.

\begin{proof}[Proof of Theorem~\ref{thm:main}]
We build a bipartite graph with bipartition $A\cup B$ as in Lemma~\ref{lem:Stein}, where $A$ is the set of all $q^2+q+1$ lines in $\mathbb{P}^2$ defined over $\mathbb{F}_q$ and $B$ is the set of all geometrically irreducible curves defined over $\mathbb{F}_q$ with degree $d$. We draw an edge between a vertex $L\in A$ and a vertex $C\in B$ if the intersection $C\cap L$ contains an $\mathbb{F}_q$-point. 

Next, we give a lower bound on the minimum degree $\delta$ of a vertex in $A$. By definition, we fix an $\mathbb{F}_q$-line $L$ and count the number of geometrically irreducible curves $C\in B$ such that $(C\cap L)(\mathbb{F}_q)\neq \emptyset$. As each $\mathbb{F}_q$-line has $q+1$ points, we can express $L(\mathbb{F}_q)=\{P_1, P_2, \dots, P_{q+1}\}$. For each subset  $S\subseteq \mathbb{P}^2(\F_q)$, define
\begin{align*}
\psi(S) &= \# \{ \text{geometrically irreducible curves } C \text { of degree } d \text{ such that } S\subseteq C(\mathbb{F}_q)\}. 
\end{align*}
By the principle of inclusion-exclusion, the degree of the vertex $L$ in the bipartite graph is at least:
\begin{equation}\label{ineq:lower-bound-psi}
\sum_{1\leq i\leq q+1} \psi(\{P_i\}) - \sum_{1\leq i<j\leq q+1} \psi(\{P_i, P_j\}).
\end{equation}
Let $N=\binom{d+2}{2}$ denote the dimension of the $\mathbb{F}_q$-vector space parameterizing all degree $d$ homogeneous polynomials in three variables. By Lemma~\ref{lem:independence} and Corollary~\ref{cor:independence}, we have
$$
\psi(\{P\}) = \frac{q^{N-1}-O_d(q^{N-2})}{q-1},  
\quad \psi(\{P,Q\}) \leq \frac{q^{N-2}-1}{q-1}
$$
for any two distinct points $P,Q \in \bP^2(\F_q)$.
The lower bound \eqref{ineq:lower-bound-psi} for the degree of $L$ thus becomes:
\[
(q+1)\cdot\frac{q^{N-1}-O_d(q^{N-2})}{q-1} - \binom{q+1}{2} \cdot \frac{q^{N-2}-1}{q-1}=\frac{1}{2} q^{N-1} - O_d(q^{N-2}).
\]
This allows us to conclude that $\delta \geq \frac{1}{2} q^{N-1}-O_d(q^{N-2})$ for the minimum degree of a vertex in $A$. Applying inequality~\eqref{ineq:covering-bound}, we find a subset $B'\subseteq B$ dominating $A$ with
\begin{align*}
|B'| &\leq |B|\frac{\log(|A|)}{\delta} + 1 \leq \frac{q^{N}-1}{q-1}\cdot \frac{\log(q^2+q+1)}{\frac{1}{2} q^{N-1} - O_d(q^{N-2})} + 1 \\ 
&\leq \left(4+o(1)\right)\log q,
\end{align*}
as $q\to \infty$. Equivalently, we can find a blocking set constructed from $(4+o(1))\log q$ geometrically irreducible curves of degree $d$, as required.
\end{proof}

\begin{remark}\label{rem:multiple}
It is straightforward to modify the above proof to construct multiple blocking sets using a union of geometrically irreducible degree $d$ curves. Recall that for each positive integer $t$, a \emph{$t$-fold blocking set} in $\bP^2(\F_q)$ is a subset of $\bP^2(\F_q)$ such that it intersects each $\F_q$-line with at least $t$ points. To form a $t$-fold blocking set in $\bP^2(\F_q)$, a similar computation shows that $(\frac{2(t+1)!}{t}+o(1))\log q$ curves are sufficient if $d\geq \min \{t,3\}$. 
\end{remark}

\section{Second Proof of Theorem~\ref{thm:main}: explicit construction}\label{sec:alg}
Let $d\ge 3$ be an integer and consider the curves $C_\alpha$ (parametrized by $\alpha\in\F_q$) given by
\begin{equation}
\label{eq:1}
yz^{d-1}=x^d -\alpha z^d.
\end{equation}
Each $C_\alpha$ is geometrically irreducible: the defining equation~\eqref{eq:1} is linear in $y$, so any nontrivial factorization of $yz^{d-1}-x^d+\alpha z^d$ in $\overline{\F_q}[x,y,z]$ would involve a nonconstant factor from $\overline{\F_q}[x,z]$; however, this is impossible since $z^{d-1}$ and $x^d-\alpha z^d$ share no common factor in $\overline{\F_q}[x,z]$.

\begin{theorem}
\label{thm:construction}
There exists a subset $S\subseteq \F_q$ of size at most $1+\left\lfloor \frac{2\log q}{\log\left(\frac{d}{d-1}\right)}\right\rfloor$ with the property that 
\begin{equation}
\label{eq:2}
U:=\bigcup_{\alpha\in S} C_\alpha(\F_q)
\end{equation}
is a blocking set in $\bP^2(\F_q)$.
\end{theorem}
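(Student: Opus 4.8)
The plan is to reduce the statement that $U$ is a blocking set to a purely additive covering problem over $\F_q$, and then solve that covering problem by a greedy argument.

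First I would analyze the $\F_q$-points of each $C_\alpha$. Setting $z=0$ in equation~\eqref{eq:1} forces $x=0$, so every curve $C_\alpha$ passes through the single point at infinity $P_\infty=[0:1:0]$, while in the affine chart $z=1$ the curve is simply the graph $y=x^d-\alpha$, contributing the $q$ points $[x:x^d-\alpha:1]$ for $x\in\F_q$. Consequently, every line whose equation $ax+by+cz=0$ has $b=0$ (equivalently, every line through $P_\infty$, which includes all vertical affine lines and the line at infinity) is automatically blocked by $P_\infty\in U$, as long as $S\neq\emptyset$. It therefore remains only to block the $q^2$ lines \emph{not} through $P_\infty$, namely the non-vertical affine lines $y=ax+c$ with $(a,c)\in\F_q^2$.

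Next I would pin down the exact blocking condition for these lines. A point $[x:x^d-\alpha:1]$ of $C_\alpha$ lies on $y=ax+c$ if and only if $x^d-\alpha=ax+c$, i.e.\ if and only if $g_a(x):=x^d-ax$ attains the value $\alpha+c$. Writing $V_a:=\{g_a(x):x\in\F_q\}\subseteq\F_q$ for the image, the line $y=ax+c$ is blocked by $C_\alpha$ precisely when $\alpha+c\in V_a$, that is, when $\alpha\in V_a-c$. Hence $U$ is a blocking set if and only if $S$ meets every translate $V_a-c$, equivalently $V_a-S=\F_q$ for every $a\in\F_q$. The crucial uniform input is that $g_a$ is monic of degree $d$ for every $a$, so each fibre $g_a^{-1}(v)$ has at most $d$ elements; this gives the $a$-independent bound $|V_a|\geq q/d$, valid in every characteristic.

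I would then finish with a greedy covering argument. Build $S=\{\alpha_1,\alpha_2,\dots\}$ iteratively and let $B\subseteq\F_q^2$ record the pairs $(a,c)$ whose line is not yet blocked, so $|B|=q^2$ initially. For each uncovered $(a,c)$ one has $\Pr_\alpha[\alpha+c\in V_a]=|V_a|/q\geq 1/d$ over a uniform $\alpha\in\F_q$, so averaging shows that some single $\alpha$ blocks at least $|B|/d$ of the currently uncovered lines; adding it leaves at most $|B|(d-1)/d$ uncovered. After $\ell$ steps the number of uncovered lines is at most $q^2\bigl((d-1)/d\bigr)^{\ell}$, which falls below $1$ — hence equals $0$ — as soon as $\ell>\tfrac{2\log q}{\log(d/(d-1))}$. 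Taking $\ell=1+\bigl\lfloor \tfrac{2\log q}{\log(d/(d-1))}\bigr\rfloor$, which strictly exceeds this threshold, yields a set $S$ with $|S|\leq\ell$ doing the job. (The identical count also follows from a union bound over the $q^2$ lines applied to $\ell$ independent uniform choices of $\alpha$.)

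The key point — the thing that makes the single set $S$ work — is the \emph{uniformity in $a$} of the estimate $|V_a|\geq q/d$: a single $S$ must block lines in all $q^2$ directions simultaneously, and it is exactly the fact that $x^d-ax$ is monic of degree $d$ for \emph{every} $a$ that lets the greedy step succeed uniformly across directions. The only other thing requiring care is the bookkeeping at infinity, namely verifying that all $C_\alpha$ share $P_\infty$ and that this disposes of precisely the lines with $b=0$, so that the covering step is genuinely needed only for the $q^2$ affine directions.
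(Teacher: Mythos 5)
Your proposal is correct and follows essentially the same route as the paper's proof: reduce to the covering condition that $S$ must meet every value set $T_{u,v}=\{x^d+ux+v\}$ (your $V_a-c$), use the uniform fibre bound $|T_{u,v}|\geq q/d$, and run a greedy/averaging argument so that each new $\alpha$ covers at least a $1/d$ fraction of the remaining unblocked lines, giving $q^2\left(\frac{d-1}{d}\right)^{\ell}<1$ for the stated $\ell$. The paper merely packages the greedy step as an explicit induction with disjoint sets $S^{(i)}_{\alpha_i}$, but the content is identical.
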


\begin{proof}
We will choose a subset $S\subseteq \mathbb{F}_q$ and set $U\colonequals \bigcup_{\alpha\in S} C_{\alpha}(\F_q)$. The goal is to select $S$ so that $U$ meets each $\F_q$-line of $\bP^2$. Our strategy is to include $\F_q$-points of curves $C_\alpha$ for suitable values of $\alpha$, chosen sequentially to block as many new lines as possible at each step.

Let $L\subseteq \bP^2$ be an $\F_q$-line; so, the equation of $L$ is $ax+by+cz=0$ for some $[a:b:c]\in\bP^2(\F_q)$. Since $[0:1:0]\in C_\alpha$ for each $\alpha$, we may assume from now on that $b\ne 0$ (otherwise $[0:1:0]\in L(\F_q)$ already). Moreover, if $\alpha \in \F_q$ and $[x_0:y_0:z_0]\in C_{\alpha}(\F_q)$ with $z_0=0$, then necessarily $[x_0:y_0:z_0]=[0:1:0]$ and hence $[x_0:y_0:z_0] \notin L(\F_q)$. Thus, it suffices to compute $L(\F_q) \cap \{[x_0:y_0:z_0]\in \bP^2(\F_q): z_0\neq 0\}$. Writing $u\colonequals a/b$ and $v\colonequals c/b$, this set is given by $\{[x:-ux-v:1]: x \in \F_q\}$. Consequently, $U\cap L(\F_q)\neq \emptyset$ if and only if 
there is some $\alpha \in S$, such that there is $x\in \F_q$ with $[x:-ux-v:1]\in C_{\alpha}(\F_q)$, that is, $x^d+ux+v=\alpha$. 

To this end, for each $(u,v)\in \F_q\times \F_q$, define
\begin{equation}
\label{eq:7}
T_{u,v}\colonequals\left\{a^d+ua+v\colon a\in\F_q\right\} \subseteq \F_q.
\end{equation}
By the discussion above, $U$ is a blocking set if and only if 
\begin{equation}\label{eq:6}
S \cap T_{u,v} \neq \emptyset \quad \text{for each } (u,v)\in\F_q\times \F_q.
\end{equation}
Since $f_{u,v}(x):=x^d+ux+v$ is a polynomial of degree $d$, every value $b\in\F_q$ has at most $d$ preimages in $\F_q$. Therefore, 
\begin{equation}
\label{eq:8}
\left|T_{u,v}\right|\ge \frac{q}{d}\quad \text{for each }(u,v)\in\F_q\times \F_q.
\end{equation}

We construct, by induction on $\ell$, a sequence of elements $\alpha_1,\dots,\alpha_\ell\in \F_q$ together with sets $S^{(i)}_{\alpha_i}\subseteq \F_q\times \F_q$ (for $i=1,\dots, \ell$) satisfying the following properties:
\begin{itemize}
\item[(I)]  for each $i=1,\dots, \ell$ and each $(u,v)\in S^{(i)}_{\alpha_i}$, we have $\alpha_i\in T_{u,v}$. Moreover, for each $(u,v)\in \F_q\times \F_q$, if $\alpha_i\in T_{u,v}$, then $(u,v)\in S^{(j)}_{\alpha_j}$ for some $1\leq j\le i$.
\item[(II)] the sets $S^{(i)}_{\alpha_i}$ are disjoint and 
\begin{equation}
\label{eq:42}
\sum_{i=1}^\ell \left|S^{(i)}_{\alpha_i}\right|\ge q^2\cdot \left(1-\left(\frac{d-1}{d}\right)^\ell\right). 
\end{equation}
\end{itemize}

We first prove the base case $\ell=1$ of the construction satisfying the conditions~(I)-(II). 
For each $\beta\in \F_q$, we define:
\begin{equation}
\label{eq:9}
S^{(1)}_\beta\colonequals\left\{(u,v)\in\F_q\times \F_q\colon \beta\in T_{u,v}\right\}.
\end{equation}
A simple counting argument, coupled with inequality~\eqref{eq:8}, yields:  
\begin{equation}
\label{eq:11}
\sum_{\beta\in\F_q}\left|S^{(1)}_{\beta}\right|=\sum_{(u,v)\in \F_q\times\F_q} \left|T_{u,v}\right|\ge q^2\cdot \frac{q}{d}.
\end{equation}
Choose $\alpha_1\in \F_q$ such that 
\begin{equation}
\label{eq:10}
\left|S^{(1)}_{\alpha_1}\right|\ge \left|S^{(1)}_\beta\right|\text{ for each }\beta\in \F_q.
\end{equation}
Inequalities~\eqref{eq:11}~and~\eqref{eq:10} yield:
\begin{equation}
\label{eq:12}
\left|S^{(1)}_{\alpha_1}\right|\ge \frac{q^2}{d}=q^2\cdot \left(1-\frac{d-1}{d}\right).
\end{equation}
This completes the proof of the base case $\ell=1$ for the construction of the points $\alpha_1,\dots, \alpha_\ell$ along with the sets $S^{(1)}_{\alpha_1},\dots, S^{(\ell)}_{\alpha_\ell}$ satisfying the properties~(I)-(II) above. 

We continue with the inductive step. Suppose we have constructed $\alpha_1,\dots, \alpha_k\in\F_q$ (for some  $k\ge 1$) along with some sets $S^{(1)}_{\alpha_1},\dots,S^{(k)}_{\alpha_k}\subseteq \F_q\times \F_q$ satisfying the properties~(I)-(II). We now construct another set $S^{(k+1)}_{\alpha_{k+1}}\subseteq \F_q\times \F_q$ corresponding to another point $\alpha_{k+1}\in\F_q$ still satisfying properties~(I)-(II).  In particular, the sets $S^{(1)}_{\alpha_1},\dots, S^{(k)}_{\alpha_k}$ are disjoint and
\begin{equation}
\label{eq:26}
\sum_{i=1}^k \left|S^{(i)}_{\alpha_i}\right| \ge q^2\cdot \left(1- \left(\frac{d-1}{d}\right)^k\right).
\end{equation}
By the inductive hypothesis~(I), we know that for each $i=1,\dots, k$ and each $(u,v)\in\F_q\times \F_q$, 
\begin{equation}
\label{eq:22}
\text{if $\alpha_i\in T_{u,v}$, then $(u,v)\in S^{(j)}_{\alpha_j}$ for some $j\le i$;} 
\end{equation}
also, the inductive hypothesis~(I) yields
\begin{equation}
\label{eq:44}
\text{for each $i=1,\dots, k$ and for each $(u,v)\in S^{(i)}_{\alpha_i}$, we have $\alpha_i\in T_{u,v}$.}
\end{equation}
We let 
\begin{equation}
\label{eq:24}
W\colonequals (\F_q\times \F_q)\setminus \left(\bigcup_{i=1}^k S^{(i)}_{\alpha_i}\right).
\end{equation} 
Also, we let $V\colonequals \F_q\setminus \{\alpha_1,\dots, \alpha_k\}$. 
Using equations~\eqref{eq:22}~and~\eqref{eq:24}, we have that 
\begin{equation}
\label{eq:23}
T_{u,v}\subseteq V\text{ for each }(u,v)\in W.
\end{equation} 
Then we define for each $\beta\in V$ the set:
$$S^{(k+1)}_\beta:=\left\{(u,v)\in W\colon \beta\in T_{u,v}\right\};$$
according to equations~\eqref{eq:23}~and~\eqref{eq:44}, we have that 
\begin{equation}
\label{eq:40}
S^{(k+1)}_\beta\cap S^{(i)}_{\alpha_i}=\emptyset\text{ for each }i=1,\dots, k.
\end{equation}
Due to the definition of each set $S^{(k+1)}_\beta$, we get that if $\beta\in T_{u,v}$ (for any $(u,v)\in \F_q\times \F_q$), then either $(u,v)\in S^{(i)}_{\alpha_i}$ for some $i=1,\dots, k$, or $(u,v)\in S^{(k+1)}_\beta$. Using again~\eqref{eq:23}, we obtain: 
\begin{equation}
\label{eq:27}
\sum_{\beta\in V}\left|S^{(k+1)}_\beta\right| = \sum_{(u,v)\in W} \left|T_{u,v}\right|\ge \left|W\right|\cdot \frac{q}{d}.
\end{equation}
In the last inequality from \eqref{eq:27}, we also employed~\eqref{eq:8}. 
Then we pick $\alpha_{k+1}\in V$ (clearly, $\alpha_{k+1}\ne \alpha_i$ for $i=1,\dots, k$ due to the definition of $V$) such that
\begin{equation}
\label{eq:28}
\left|S^{(k+1)}_{\alpha_{k+1}}\right|\ge \left|S^{(k+1)}_\beta\right|\text{ for each }\beta\in V.
\end{equation}
Therefore, equations~\eqref{eq:27} and \eqref{eq:28} yield
\begin{equation}
\label{eq:29}
\left|S^{(k+1)}_{\alpha_{k+1}}\right|\ge \frac{|W|\cdot q}{d\cdot |V|}> \frac{|W|}{d}.
\end{equation}
We note that the sets $S^{(i)}_{\alpha_i}$ are all disjoint for $i=1,\dots, k+1$ (by the inductive hypothesis coupled with equation~\eqref{eq:40}); furthermore, condition~(I) above is satisfied for the points $\alpha_1,\dots, \alpha_{k+1}$. Combining the  equations~\eqref{eq:29},~\eqref{eq:24}~and~\eqref{eq:26}, we obtain that
\begin{align*}
\sum_{i=1}^{k+1}\left| S^{(i)}_{\alpha_i}\right| 
&\ge \left|\bigcup_{i=1}^k S^{(i)}_{\alpha_i}\right| + \frac{q^2-\sum_{i=1}^k \left|S^{(i)}_{\alpha_i}\right|}{d}\\
&\ge \frac{q^2}{d}+ q^2\cdot \left(1-\left(\frac{d-1}{d}\right)^k\right)\cdot \frac{d-1}{d}\ge q^2\cdot \left(1-\left(\frac{d-1}{d}\right)^{k+1}\right),
\end{align*}
as desired for proving that also condition~(II) holds for $S^{(1)}_{\alpha_1},\dots, S^{(k+1)}_{\alpha_{k+1}}$.

So, inductively, we obtain the construction of points $\alpha_1,\dots, \alpha_\ell\in\F_q$ such that for the corresponding (disjoint) sets $S^{(i)}_{\alpha_i}\subseteq \F_q\times \F_q$, we have the inequality
\begin{equation}
\label{eq:31}
\left|\bigcup_{i=1}^\ell S^{(i)}_{\alpha_i}\right|\ge  q^2\cdot \left(1 - \left(\frac{d-1}{d}\right)^\ell\right).
\end{equation}
Furthermore, by construction, for each $(u,v)\in \bigcup_{i=1}^\ell S^{(i)}_{\alpha_i}$, there exists some $i\in\{1,\dots, \ell\}$ such that $\alpha_i\in T_{u,v}$. Our construction stops when we achieve that 
\begin{equation}
\label{eq:32}
\bigcup_{i=1}^\ell S^{(i)}_{\alpha_i}=\F_q\times\F_q 
\end{equation}
because then the corresponding set $S:=\{\alpha_1,\dots, \alpha_\ell\}$ will have the desired property~\eqref{eq:6}. So, in order to obtain~\eqref{eq:32}, it suffices to have that 
\begin{equation}
\label{eq:33}
\left|\bigcup_{i=1}^\ell S^{(i)}_{\alpha_i}\right|>q^2-1.
\end{equation}
Using inequality \eqref{eq:31}, we see that inequality~\eqref{eq:33} is achieved once we have:
\begin{equation}
\label{eq:34}
\left(\frac{d-1}{d}\right)^\ell <  \frac{1}{q^2}.
\end{equation}
So, indeed, we can find a set $S$ such that
$|S|\le 1+\left\lfloor \frac{2\log q}{\log\left(\frac{d}{d-1}\right)}\right\rfloor,$ as required. 
\end{proof}

\begin{remark}\label{rem:alg}
Assume that $d'=\gcd(d,q-1)>1$. In this case, we can show that if $\bigcup_{\alpha\in S}C_\alpha(\F_q)$ is a blocking set in $\bP^2(\F_q)$, then necessarily $|S|\geq c_{d'}\log q$ for some constant $c_{d'}$ depending on $d'$. Indeed, by equation~\eqref{eq:6}, we have $S \cap T_{u,v}\neq \emptyset$ for all $u,v\in \F_q$; in particular, for each $v\in \F_q$, we have $S \cap T_{0,v}=S \cap \{a^{d'}+v: a \in \F_q\}\neq \emptyset$. Now, if $|S|<c_{d'}\log q$, then a standard application of Weil's bound (see for example \cite[Lemma 2.1]{AGY23}) shows that there is $x\in \F_q$, such that $s-x$ is not a $d'$-th power in $\F_q$ for each $s \in S$, that is,  $S \cap T_{0,x}=\emptyset$, contradicting to the above assumption on $S$. Thus, it follows that $|S|\geq c_{d'}\log q$.
\end{remark}

We end the paper with the following open question regarding the family of curves $C_{\alpha}\subset \mathbb{P}^2$ defined by $yz^{d-1}=x^d-\alpha z^d$. 

\begin{question}\label{quest:final}
Given $q$ and $d$, what is the smallest possible size of $S \subset \F_q$ such that $\bigcup_{\alpha\in S}C_\alpha(\F_q)$  is a blocking set in $\bP^2(\F_q)$? 
\end{question}

When $\gcd(d,q-1)>1$, we have shown that the answer to Question~\ref{quest:final} is between $c_1\log q$ and $c_2\log q$, where $c_1$ is a constant depending only on $\gcd(d,q-1)$ and $c_2$ is a constant depending only on $d$. Getting an asymptotically sharp answer in this case seems challenging.

By contrast, if $\gcd(d,q-1)=1$, then our argument for the lower bound no longer applies. Indeed, the map $\mathbb{F}_q\to \mathbb{F}_q$ given by $x\mapsto x^{d}$ is a permutation, and so, it is not useful to consider $d$-th power residues. It would be interesting to establish the asymptotic behavior of the answer to Question~\ref{quest:final} in this case.

\section*{Acknowledgments}
We are grateful to Alexei Entin for helpful discussions regarding Theorem~\ref{thm:entin}.

\bibliographystyle{abbrv}
\bibliography{main}

\begin{thebibliography}{10}

\bibitem{AL85}
H.~L. Abbott and A.~Liu.
\newblock Property {$B(s)$} and projective planes.
\newblock {\em Ars Combin.}, 20:217--220, 1985.

\bibitem{AGY23-FFA-pencils}
S.~Asgarli, D.~Ghioca, and C.~H. Yip.
\newblock Existence of pencils with nonblocking hypersurfaces.
\newblock {\em Finite Fields Appl.}, 92:Paper No. 102283, 11, 2023.

\bibitem{AGY23}
S.~Asgarli, D.~Ghioca, and C.~H. Yip.
\newblock Plane curves giving rise to blocking sets over finite fields.
\newblock {\em Des. Codes Cryptogr.}, 91(11):3643--3669, 2023.

\bibitem{AGY22b}
S.~Asgarli, D.~Ghioca, and C.~H. Yip.
\newblock Most plane curves over finite fields are not blocking.
\newblock {\em J. Combin. Theory Ser. A}, 204:Paper No. 105871, 26, 2024.

\bibitem{AGY26}
S.~Asgarli, D.~Ghioca, and C.~H. Yip.
\newblock Proportion of blocking curves in a pencil.
\newblock {\em Discrete Math.}, 349(1):Paper No. 114668, 2026.

\bibitem{AP96}
Y.~Aubry and M.~Perret.
\newblock A {W}eil theorem for singular curves.
\newblock In {\em Arithmetic, geometry and coding theory ({L}uminy, 1993)}, pages 1--7. de Gruyter, Berlin, 1996.

\bibitem{BSS2012}
A.~Blokhuis, P.~Sziklai, and T.~{Sz{\H{o}}nyi}.
\newblock Blocking sets in projective spaces.
\newblock In L.~Storme and J.~De~Beule, editors, {\em Current Research Topics in Galois Geometry}, pages 61--84. Nova Science Publishers, Inc., New York, 2012.
\newblock Electronic ISBN: 9781620813638; 24 pages.

\bibitem{Bor88}
E.~Boros.
\newblock {${\rm PG}(2,p^s),\;p>2,$} has property {$B(p+2)$}.
\newblock {\em Ars Combin.}, 25:111--113, 1988.

\bibitem{BF74}
A.~Bruen and J.~C. Fisher.
\newblock Blocking sets and complete {$k$}-arcs.
\newblock {\em Pacific J. Math.}, 53:73--84, 1974.

\bibitem{CM06}
A.~Cafure and G.~Matera.
\newblock Improved explicit estimates on the number of solutions of equations over a finite field.
\newblock {\em Finite Fields Appl.}, 12(2):155--185, 2006.

\bibitem{Ent21}
A.~Entin.
\newblock Monodromy of hyperplane sections of curves and decomposition statistics over finite fields.
\newblock {\em Int. Math. Res. Not. IMRN}, (14):10409--10441, 2021.

\bibitem{ESS83}
P.~{Erd\H os}, R.~Silverman, and A.~Stein.
\newblock Intersection properties of families containing sets of nearly the same size.
\newblock {\em Ars Combin.}, 15:247--259, 1983.

\bibitem{GS08}
A.~G\'acs and T.~{Sz\H onyi}.
\newblock Random constructions and density results.
\newblock {\em Des. Codes Cryptogr.}, 47(1-3):267--287, 2008.

\bibitem{Hef89}
A.~Hefez.
\newblock Nonreflexive curves.
\newblock {\em Compositio Math.}, 69(1):3--35, 1989.

\bibitem{ISF91}
T.~Ill\'es, T.~{Sz\H onyi}, and F.~Wettl.
\newblock Blocking sets and maximal strong representative systems in finite projective planes.
\newblock In {\em Proceedings of the {F}irst {I}nternational {C}onference on {B}locking {S}ets ({G}iessen, 1989)}, number 201, pages 97--107, 1991.

\bibitem{LW54}
S.~Lang and A.~Weil.
\newblock Number of points of varieties in finite fields.
\newblock {\em Amer. J. Math.}, 76:819--827, 1954.

\bibitem{Par86}
R.~Pardini.
\newblock Some remarks on plane curves over fields of finite characteristic.
\newblock {\em Compositio Math.}, 60(1):3--17, 1986.

\bibitem{PS12}
V.~Pepe and L.~Storme.
\newblock The use of blocking sets in {G}alois geometries and in related research areas.
\newblock In {\em Buildings, finite geometries and groups}, volume~10 of {\em Springer Proc. Math.}, pages 305--327. Springer, New York, 2012.

\bibitem{S26}
K.~{Slavov}.
\newblock {Square values of several polynomials over a finite field}.
\newblock {\em Finite Fields Appl.}, 109:Paper No. 102696, 2026.

\bibitem{Ste74}
S.~K. Stein.
\newblock Two combinatorial covering theorems.
\newblock {\em J. Combinatorial Theory Ser. A}, 16:391--397, 1974.

\bibitem{Szo92b}
T.~{Sz{\H{o}}nyi}.
\newblock Blocking sets in finite planes and spaces.
\newblock {\em Ratio Mathematica}, 5:93--106, 1992.

\bibitem{Szo92a}
T.~{Sz\H{o}nyi}.
\newblock Note on the existence of large minimal blocking sets in {G}alois planes.
\newblock {\em Combinatorica}, 12(2):227--235, 1992.

\bibitem{Ugh88}
E.~Ughi.
\newblock On {$(k,n)$}-blocking sets which can be obtained as a union of conics.
\newblock {\em Geom. Dedicata}, 26(3):241--245, 1988.

\end{thebibliography}

\end{document}